\documentclass[a4paper,11pt]{amsart}
\usepackage[utf8]{inputenc}
\usepackage[a4paper]{geometry} 
\usepackage[english]{babel}

\usepackage{mathtools,amssymb,bm,amsthm,amsmath} 
\usepackage{amsfonts}
\usepackage{xcolor} 
\usepackage{graphicx} 
\usepackage[pdfencoding=auto]{hyperref} 
\usepackage{stmaryrd, enumitem}
\usepackage{dsfont}
\usepackage{comment}

\newtheorem{theorem}{Theorem}[section]
\newtheorem{proposition}[theorem]{Proposition}
\newtheorem{corollary}[theorem]{Corollary}
\newtheorem{lemma}[theorem]{Lemma}
\newtheorem{fact}[theorem]{Fact}

\newtheorem*{step1}{Step 1}
\newtheorem*{step2}{Step 2}
\newtheorem*{step3}{Step 3}
\newtheorem*{step4}{Step 4}
\newtheorem*{step5}{Step 5}
\newtheorem*{step6}{Step 6}

\theoremstyle{remark}

\theoremstyle{definition}
\newtheorem{definition}[theorem]{Definition}
\newtheorem{Remark}[theorem]{Remark}

\numberwithin{equation}{section}

\newcommand{\R}{\mathbb{R}}
\newcommand{\N}{\mathbb{N}}

\newcommand{\ind}{\mathds{1}}
\newcommand{\tild}{\widetilde}
\newcommand{\minus}{\setminus}
\newcommand{\vertiii}[1]{{\left\vert\kern-0.25ex\left\vert\kern-0.25ex\left\vert #1 
		\right\vert\kern-0.25ex\right\vert\kern-0.25ex\right\vert}}

\DeclareMathOperator{\Span}{span}
\DeclareMathOperator{\Iso}{Iso}
\DeclareMathOperator{\Emb}{Emb}
\DeclareMathOperator{\BM}{BM}
\DeclareMathOperator{\Age}{Age}
\DeclareMathOperator{\Ball}{B}
\DeclareMathOperator{\Sphere}{S}
\DeclareMathOperator{\supp}{supp}

\newcommand{\FLMT}{Ferenczi--L\'opez-Abad--Mbombo--Todorcevic}
\newcommand{\CDDK}{C\'uth--Dole\v{z}al--Doucha--Kurka}
\newcommand{\CdRD}{C\'uth--de Rancourt--Doucha}

\begin{document}
	\emergencystretch 3em

\title{Isometric rigidity and Fra\"iss\'e properties of Orlicz sequence spaces}

\author[N.~de~Rancourt]{No\'e de Rancourt}
\author[M.~Fakhoury]{Micheline Fakhoury}

\address[N.~de~Rancourt]{Universit\'e de Lille, CNRS, UMR 8524 -- Laboratoire Paul Painlev\'e, F-59000 Lille, France}
\email{nderancour@univ-lille.fr}

\address[M.~Fakhoury]{Universit\'e de Lille, CNRS, UMR 8524 -- Laboratoire Paul Painlev\'e, F-59000 Lille, France}
\email{micheline.fakhoury@univ-lille.fr}

\subjclass[2020]{Primary: 46B04; Secondary: 46B45, 46E30, 03C66, 03C35.}

\keywords{Fra\"iss\'e Banach spaces, Orlicz spaces, isometric rigidity, $\omega$-categoricity}

\thanks{Both authors acknowledge support from the Labex CEMPI (ANR-11-LABX-0007-01) and the CDP C2EMPI, together with the French State under the France-2030 programme, the University of Lille, the Initiative of Excellence of the University of Lille, the European Metropolis of Lille for their funding and support of the R-CDP-24-004-C2EMPI project.}

\date{}

\begin{abstract}
We provide an approximate version of a rigidity result by Randrianantoanina: for a large class of Orlicz sequence spaces, almost isometric embeddings almost preserve disjointness. In specific cases, we can even prove that such embeddings almost preserve basic vectors. As a consequence, we prove that some Orlicz sequences spaces are guarded Fra\"iss\'e but not $\omega$-categorical; moreover, they do not contain copies of $\ell_2$ and their age is not closed. This answers a question of C\'uth--de Rancourt--Doucha.
\end{abstract}

\maketitle

\section{Introduction}

\subsection{Fra\"iss\'e theory for Banach spaces}

In this paper, all Banach spaces will be separable. Say that a Banach space $X$ is \textit{ultrahomogeneous} if for every finite-dimensional subspace $E \subseteq X$, group $\Iso(X)$ of all (surjective linear) isometries of $X$ acts transitively on the set $\Emb(E, X)$ of all (linear) isometric embeddings $E \to X$. Hilbert spaces are obviously ultrahomogeneous, and an important question, raised by \FLMT{} \cite{FLMT}, is to know whether they are the only ones. This question is related to the well-known \textit{rotation problem} of Mazur, asking whether $\Iso(X)$ acts transitively on the unit sphere $S_X$ of $X$.

\smallskip

Several approximate versions of ultrahomogeneity have been considered, the most important one being this of \textit{Fra\"iss\'e Banach spaces}, introduced by \FLMT{} \cite{FLMT}. For $C \geqslant 1$, let $\Emb_C(E, X)$ denote the set of all linear injections $T \colon E \to X$ with $\max(\|T\|, \|T^{-1}\|) \leqslant C$. Say that a group $G$ acts $\varepsilon$-transitively on a metric space $M$ if for every $x, y \in M$, we have $d(y, Gx) \leqslant \varepsilon$.

\begin{definition}[\FLMT{} 2020]
A Banach space $X$ is \textit{weak Fra\"iss\'e} if for every finite-dimensional subspace $E\subseteq X$ and $\varepsilon > 0$, there exists $\delta > 0$ such that $\Iso(X)$ acts $\varepsilon$-transitively on $\Emb_{1+\delta}(E, X)$. It is \textit{Fra\"iss\'e} if it is weak Fra\"iss\'e with $\delta$ depending only on $\varepsilon$ and $\dim(E)$.
\end{definition}

\noindent Examples of infinite-dimensional Fra\"iss\'e Banach spaces include the Gurari\u{i} space \cite{Gurarij} and, as shown in \cite{FLMT}, the spaces $L_p([0, 1])$ for $1 \leqslant p < \infty$ with $p \notin 4, 6, 8, \ldots$ (the spaces $L_{2n}[0, 1]$ for integers $n\geqslant 2$ are known not to be Fra\"iss\'e, as a consequence of a result of Randrianantoanina \cite{BeataFraisse}). It is an important open question whether those examples are the only ones. It is not knwon whether the classes of Fra\"iss\'e and weak Fra\"iss\'e Banach spaces coincide.

\smallskip

The terminology \textit{Fra\"iss\'e} comes from \textit{Fra\"iss\'e theory}, a field that originated with the proof of the \textit{Fra\"iss\'e correspondence} \cite{Fraisse}, a model-theoretic result establishing a bijective correspondence between countable, first-order ultrahomogeneous structures, and certain classes of finitely generated structures called \textit{Fra\"iss\'e classes}. (We refer to \cite[Section 7.1]{Hodges} for more details on the basics of the theory.)  The definition of Fraïssé Banach spaces has been tailored so that a similar correspondance holds in this setting (see \cite[Corollary 2.27]{FLMT}). Beyond the Mazur rotation problem, the links between Fra\"iss\'e theory and topological dynamics of automorphism groups (see \cite{KPT}) also motivated the introduction of this notion, and were exploited in \cite{FLMT} to provide a new proof of extreme amenability of the groups $\Iso(L_p)$, $1 \leqslant p \neq 2 < \infty$ (a result by Giordano--Pestov \cite{GiordanoPestov}).

\smallskip


In a recent paper \cite{CDDK}, \CDDK{} introduced several Polish spaces coding the class of all separable Banach spaces, and proved that the Gurari\u{i} space and the spaces $L_p([0, 1])$, $1\leqslant  p < \infty$, all had $G_\delta$ isometry classes in those codings. They asked about the existence of other examples. A natural first step was to characterize Banach spaces having a $G_\delta$ isometry class; this was done by \CdRD{} \cite{CdRD}. The characterization they provided is a weakening of the weak Fra\"iss\'e property, and Banach spaces satisfying it were called \textit{guarded Fra\"iss\'e}.

\begin{definition}[C\'uth--de Rancourt--Doucha 2024+]
A separable Banach space $X$ is \textit{guarded Fra\"iss\'e} if for every finite-dimensional subspace $E\subseteq X$ and every $\varepsilon>0$, there exist $\delta>0$, and a finite dimensional subspace $F\subseteq X$ with $E \subseteq F$, such that $\Iso(X)$ acts $\varepsilon$-transitively on $\Emb_{1+\delta}(F,X)\restriction E$.
\end{definition}

\noindent Here, $\Emb_{1+\delta}(F,X)\restriction E$ denotes the set $\{T_{\restriction E} \mid T\in \Emb_{1+\delta}(F,X)\}$. Note that guarded Fra\"iss\'eness is a Banach space version of the notion of \textit{prehomogeneity} from model theory; see \cite{KrawczykKubis} for more information on this notion. There is also a version of the Fra\"iss\'e correspondence for guarded Fra\"iss\'e Banach spaces; see \cite[Theorem 3.9]{CdRD}.

\smallskip

As a consequence of \cite[Proposition 3.7]{FLMT}, all spaces $L_p([0, 1])$, $1 \leqslant p < \infty$, are guarded Fra\"iss\'e. The proof of the above cited result importantly relies on Banach--Lamperti's isometric rigidity theorem for $L_p$-spaces \cite{Lamperti}. Those spaces are actually even \textit{cofinally Fra\"iss\'e}, an intermediate notion between guarded Fra\"iss\'e and weak Fra\"iss\'e; see Definition \ref{def:CofFraisse} below. New examples of non-weak Fra\"iss\'e, cofinally Fra\"iss\'e Banach spaces were provided in \cite{CdRD}, namely the Lebesgue--Bochner spaces $L_p([0, 1], L_q([0, 1]))$, whenever $1 \leqslant p \neq q < \infty$ and the pair $(p, q)$ doesn't satisfy $q = 2$ or $p < q < 2$. The proof, once again, relies on an isometric rigidity result for those spaces due to Guerre--Raynaud \cite{GuerreRaynaud}. In general, good candidates for cofinal Fra\"iss\'eness are spaces having the same degree of isometric rigidity locally and globally.

\medskip

\subsection{Oligomorphy and related notions} One reason of the interest of model theorists in the Fra\"iss\'e construction is its usefulness for constructing $\omega$-categorical structures with prescribed properties. A countable, first-order structure is said to be \textit{$\omega$-categorical} if it is entirely determined, up to isomorphism, by its first-order theory. (We refer the reader to \cite[Section 7.3]{Hodges} for more details about this notion.) In continuous logic, a version of this notion exists for separable first order metric structures, and in particular for separable Banach spaces, see \cite{BYBHU}.  Stating the original definition would require much formalism; however, the following equivalent characterization of $\omega$-categorical Banach spaces (see \cite[Theorem 12.10 and Corollary 12.11]{BYBHU}) can be taken as a definition.


\begin{definition}
    A separable Banach space $X$ is \textit{$\omega$-categorical}, or \textit{oligomoprhic}, if for every $n \in \N$, the metric quotient $(\Ball_X)^n\sslash\Iso(X)$ is compact.
\end{definition}

\noindent Here, $\Ball_X$ denotes the closed unit ball of $X$ and $(\Ball_X)^n\sslash\Iso(X)$ denotes the metric quotient of $(\Ball_X)^n$ by the action of $\Iso(X)$ (see \cite[Definition 9.1]{CdRD} for a precise definition of this quotient). The alternative terminology \textit{oligomorphic} is motivated by the above characterization; following \cite{FerencziRincon}, we will favour it over \textit{$\omega$-cateogorical} since it also makes sense for nonseparable Banach spaces. Examples of oligomorphic Banach spaces include $\mathcal{C}(2^\N)$, the Gurari\u{i} space \cite{BenYaacovHenson}, all spaces $L_p([0, 1])$ \cite{BYBHU}, and all spaces $L_p([0, 1], L_q([0, 1]))$ \cite{HensonRaynaud}.

\smallskip

Oligomorphic Banach spaces have the following interesting property: if a Banach space $Y$ is finitely representable in an oligomorphic Banach space $X$, then $Y$ can be isometrically embedded in $X$ (see \cite[Theorem 9.9]{CdRD}). This has the two following consequences.
\begin{itemize}
    \item Every oligomorphic Banach space contains an isometric copy of $\ell_2$ (this is a consequence of the Dvoretky theorem).
    \item Oligomorphic Banach spaces have closed age. The \textit{age} of a Banach space $X$, denoted by $\Age(X)$, is the class of all of its finite-dimensional subspaces, considered up to isometry. $\Age(X)$ is seen as a subset of the class of all finite-dimensional Banach spaces, endowed with the Banach-Mazur distance. So saying that $\Age(X)$ is closed means that whenever a finite-dimensional space $E$ admits almost isometric copies in $X$, then it actually has an isometric copy in $X$.
\end{itemize}

\smallskip

It is a standard fact that all countable ultrahomogeneous first-order structures in a finite relational language are $\omega$-categorical. Banach spaces behave quite similarly as structures on a finite relational language, and as expected, all Fra\"iss\'e Banach spaces are oligomorphic (see \cite[Theorem 5.13]{FerencziRincon}). In particular, they contain isometric copies of $\ell_2$ and have closed age; actually, a Banach space is Fra\"iss\'e iff it is weak Fra\"iss\'e and has closed age, as shown in \cite[Theorem 2.12]{FLMT}. Similarly, in \cite{CdRD}, \CdRD{} asked whether guarded Fra\"iss\'e Banach spaces should necessarily be oligomorphic, have closed age, and contain isometric copies of $\ell_2$. The main goal of the present paper is to provide a negative answer to those questions.

\smallskip

Note that there are simple examples of discrete prehomogeneous structures that are not $\omega$-categorical. This is for instance the case of the infinite linear graph (with vertex set $\mathbb{Z}$, and where two integers are connected iff they are consecutive), see \cite[Example 5.4]{KrawczykKubis}. A natural idea to find counterexamples to the above questions is to look at Banach spaces presenting similar rigidity properties as this graph. We will find such spaces among \textit{Orlicz sequences spaces}, a class of spaces generalizing $\ell_p$ spaces.

\medskip

\subsection{Orlicz spaces} We start with recalling the definition of Orlicz spaces. In what follows, we fix a $\sigma$-finite measure space $(\Omega, \mu)$ and we denote by $L_0(\mu)$ the vector space of all measurable functions $f \colon \Omega \to \R$, where almost everywhere equal functions are identified. We also fix an \textit{Orlicz function}, that is, a convex function $M \colon [0, +\infty) \to [0, +\infty)$ with $M(0) = 0$ and $\lim_{t \to \infty} M(t) = \infty$.

\begin{definition}
    For $f \in L_0(\mu)$, define the \textit{Luxemburg norm} of $f$ by $$\|f\|_M \coloneq \inf\left\{\rho > 0 \,\left|\, \int M\left(\frac{|f|}{\rho}\right)d\mu \leqslant 1\right.\right\},$$
    where by convention, $\inf \varnothing = + \infty$. The \textit{Orlicz space} associated to $M$ on $(\Omega, \mu)$ is defined as $$L_M(\mu) \coloneq \{f \in L_0(\mu) \mid \|f\|_M < \infty\}.$$
    We equip it with the norm $\|\cdot\|_M$.
\end{definition}

\noindent It is a classical result that Orlicz spaces are Banach spaces. When $M(t) = t^p$ for $1 \leqslant p < \infty$, the space $L_M(\mu)$ is simply $L_p(\mu)$. In the special case when $(\Omega, \mu)$ is $\N$ (resp. an $n$-elements set) with the counting measure, the space $L_M(\mu)$ is denoted by $\ell_M$ (resp. $\ell_M^n$). Spaces $\ell_M$ are called \textit{Orlicz sequence spaces}. The space $\ell_M$ is separable iff the function $M$ satisfies the \textit{$\Delta_2$-condition at $0$}, that is, $$\limsup_{t \to 0}\frac{M(2t)}{M(t)} < \infty.$$
Whenever this condition holds, the canonical basis $(e_n)$ of $c_{00}$ is a $1$-symmetric basis of~$\ell_M$. We refer the reader to \cite{RaoRen} for the general theory of Orlicz spaces, and to \cite[Chapter 4]{LT} for the special case of Orlicz sequences spaces.

\smallskip

For $f\in L_0(\mu)$, the \textit{support} of $f$ is defined as $\supp(f) \coloneq \{x \in \Omega \mid f(x) \neq 0\}$. This is a subset of $\Omega$ defined up to zero measure. Two vectors $f, g \in L_0(\mu)$ are said to be \textit{disjoint} whenever $\mu(\supp(f) \cap \supp(g)) = 0$. The classical Banach--Lamperti theorem \cite{Lamperti} asserts that every isometric embedding $T \colon L_p(\mu) \to L_p(\mu)$, where $1 \leqslant p \neq 2 < \infty$, \textit{preserves disjointness}, i.e. $T(f)$ and $T(g)$ are disjoint whenever $f$ and $g$ are. This result has been generalized by Randrianantoanina \cite{Beata} to a large class of Orlicz spaces on $[0, 1]$.

\begin{theorem}[Randrianantoanina 1998]\label{thm:Beata}
    Assume that $M$ is $\mathcal{C}^2$ on $(0, +\infty)$, satisfies the $\Delta_{2+}$-condition at infinity, that $M'(0) = 0$, $M''(t) > 0$ whenever $t > 0$, and $M''(t)$ tends to either $0$ or $\infty$ when $t \to 0$. Then all isometric embeddings $L_M([0, 1]) \to L_M([0, 1])$ preserve disjointness.
\end{theorem}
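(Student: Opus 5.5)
The plan is to follow the Banach--Lamperti strategy: isometries preserve a norm-derived quantity that vanishes exactly on disjoint pairs. The natural candidate is the second-order behaviour of the norm along a direction. For $f,g\in L_M$ with $\|f\|_M=1$, consider $\varphi_{f,g}(s)=\|f+sg\|_M$ for small $s$. Any isometric embedding $T$ satisfies $\varphi_{Tf,Tg}=\varphi_{f,g}$, so any invariant read off from these functions is preserved. The goal is therefore an invariant that detects disjointness.

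First, I would compute derivatives of the Luxemburg norm by implicit differentiation. The norm $\rho(s)$ is defined by $\int M(|f+sg|/\rho)\,d\mu=1$. The $\Delta_{2+}$ condition at infinity, the $\mathcal C^2$ hypothesis and $M''>0$ allow differentiation under the integral. In this way I obtain formulas for $\rho'(0)$ and $\rho''(0)$ involving $\int M'(|f|)\,\mathrm{sgn}(f)\,g$ and $\int M''(|f|)g^2$, together with correction terms.

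Second, I would exploit the degenerate behaviour of $M''$ at $0$. Take $f$ disjoint from $g$. On $\supp g$ the function $f$ vanishes, so the expansion of $\rho(s)$ involves $\int_{\supp g} M(s|g|/\rho)$. This term is $o(s^2)$ if $M''(t)\to0$, and $\gg s^2$ if $M''(t)\to\infty$. In other words, the second-order expansion of $\varphi_{f,g}$ is anomalous precisely in the disjoint directions. For non-disjoint $f,g$, the part of $g$ on $\supp f$ instead contributes a genuine quadratic term $\int_{\supp f} M''(|f|)g^2$.

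The cleaner route is to isolate a property $P(f,g)$, defined only through the functions $s\mapsto\|af+bsg\|$, that holds iff $f\perp g$. A natural choice is: for all scalars $a,b$, $\|af+bg\|^2$ behaves like a disjoint sum, i.e. $\int M(|af+bg|)=\int M(|af|)+\int M(|bg|)$ for all $a,b$. Characterizing this norm-theoretically uses the first-order condition $\rho'(0)=0$ together with the anomalous second-order growth.

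The main obstacle is the converse: I must show that if $f,g$ are not disjoint, then some combination $\|f+sg\|$ has a nondegenerate quadratic term, uniformly distinguishable from the disjoint case. Here the hypothesis on the limit of $M''$ at $0$ matters. For overlapping parts, I would restrict to a set where $|f|\geqslant\eta>0$, on which $M''(|f|)$ is bounded away from $0$ and $\infty$. I would then compare this with the tail where $f$ is small, using a dominated-convergence argument. The difficulty is controlling regions where $|f|$ is small but nonzero, since there $M''(|f|)$ blows up or degenerates. To handle this, I would first try splitting $\supp f$ into $\{|f|>\eta\}$ and $\{0<|f|\leqslant\eta\}$ and letting $\eta\to0$.

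Once the disjointness characterization is established, isometric invariance gives $Tf\perp Tg$ directly. Non-atomicity of $[0,1]$ is not needed for this direction.
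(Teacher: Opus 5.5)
The paper does not prove this theorem; it quotes it from Randrianantoanina. The closest argument in the paper is Section~\ref{sec:2}, which works only in a regime where $M''(0)=0$. Your set-up, implicit differentiation of $\rho(s)=\|f+sg\|_M$, is the same mechanism. The gap is that you never fix a criterion that is both determined by the norm and provably equivalent to disjointness.

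\textbf{The criterion is never established.} Your property $P(f,g)$ is an identity between modulars $\int M(|\cdot|)$. An isometry preserves $\|\cdot\|_M$, not the modular of vectors off the unit sphere, so $P$ cannot be transported through $T$; it restates disjointness rather than giving an invariant. Your norm-theoretic replacement, ``$\rho'(0)=0$ together with anomalous second-order growth'', is false when $M''(t)\to\infty$.

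\textbf{Counterexample when $M''\to\infty$.} Take $M(t)=t^p$ with $1<p<2$, which satisfies all the hypotheses, so $L_M=L_p$.
Let $f(x)=x^k$ with $k(2-p)>1$, and $g=\ind_{[0,1/2]}-c\,\ind_{(1/2,1]}$, with $c>0$ chosen so that $\int f^{p-1}g=0$. Then $\rho'(0)=0$. Moreover $\int f^{p-2}g^2\geqslant\int_0^{1/2}x^{k(p-2)}\,dx=\infty$.
Concretely, the integrand of $\|f+sg\|_p^p-\|f\|_p^p-s\,p\int f^{p-1}g$ is pointwise nonnegative by convexity. Fix a small $\lambda>0$; on $\{x^k\leqslant\lambda|s|\}$ the integrand is $\gtrsim|s|^p$. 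That set has measure $(\lambda|s|)^{1/k}$, so it contributes $\gtrsim|s|^{p+1/k}\gg s^2$.
Hence $(\rho(s)-\rho(0))/s^2\to\infty$, exactly as for a disjoint pair, even though $\supp f=\supp g$. In this case the second-order blow-up detects divergence of $\int_{\supp f}M''(|f|)g^2$, not disjointness. The divergence on $\{0<|f|\leqslant\eta\}$ is genuine, so splitting at level $\eta$ and letting $\eta\to0$ cannot remove it. The case $M''\to\infty$ needs an additional idea that your proposal does not contain.

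\textbf{The case $M''\to0$ closes, and your stated obstacle does not arise there.} Disjoint $f,g$ give $\rho'(0)=\rho''(0)=0$. Conversely, if $\rho'(0)=0$, the analogue of \eqref{formula2} reduces to
$\rho''(0)=\partial^2_\alpha F(0,\|f\|)/|\partial_\eta F(0,\|f\|)|$, with $\partial^2_\alpha F(0,\|f\|)=\|f\|^{-2}\int g^2M''(|f|/\|f\|)\,d\mu$.
This is strictly positive as soon as $\supp f\cap\supp g$ has positive measure, because $M''>0$ on $(0,\infty)$.
So ``$\rho'(0)=\rho''(0)=0$'' characterizes disjointness, with no truncation needed. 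The only work is justifying two differentiations under the integral for arbitrary elements of $L_M$, which is where $\Delta_{2+}$ enters.

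This is the exact-isometry form of Step~6 in Section~\ref{sec:2}. There, the claim that small $|N'_{f,g}(0)|$ forces $N''_{f,g}(0)$ to be large provides the uniformity needed for almost isometries. For the exact statement, the uniformity you mention is unnecessary.
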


\noindent The \textit{$\Delta_{2+}$-condition at infinity} in use here is a strengthening of the classical \textit{$\Delta_2$-condition at infinity} (which ensures separability of $L_M([0, 1])$). We won't explicit this condition, which ensures both a polynomial-like growth rate and regularity at infinity.

\medskip

\subsection{Results} Our main result is that some Orlicz sequence spaces are cofinally Fra\"iss\'e. Set aside the fact that Theorem \ref{thm:Beata} only holds for function spaces over $[0, 1]$, another issue is that proving cofinal Fra\"iss\'eness usually requires an approximate rigidity result for \textit{almost isometric} embeddings (i.e. elements of $\Emb_{1+\delta}(E, X)$ for small $\delta$). In the case of spaces $L_p([0, 1])$ or $L_p([0, 1], L_q([0, 1]))$, rigidity results for \textit{exact} isometric embeddings are enough, because oligomorphy allows one to pass from exact isometric to almost isometric embeddings for free, see \cite[Lemma 9.17]{CdRD}. We cannot hope for such a shortcut in our case, given that our goal is precisely to find non-oligomorphic examples.

\smallskip

Our first result is a result of almost disjointness preservation for almost isometric embeddings in a large class of Orlicz spaces.

\begin{theorem}\label{thm:MainDisjointnessPreservation}
    Let $M_1$ and $M_2$ be Orlicz functions. Suppose that for $i\in\{1, 2\}$, $M_i$ is $\mathcal{C}^3$ on $[0, +\infty)$,  $M_i^{(3)}$ is increasing, $M_i(t) = o(t^3)$ when $t \to 0$, and $M_i$ satisfies the $\Delta_{2++}$-condition at $0$. Let $X = \ell_{M_1}$ or $\ell_{M_1}^n$ for some $n \in \N$. Then for every $\varepsilon > 0$, there exists $\delta > 0$ such that for every $T \in \Emb_{1+\delta}(X, \ell_{M_2})$ and every disjoint $f, g \in X$, there exist disjoint $\widetilde{f}, \widetilde{g} \in \ell_{M_2}$ such that $\|T(f) - \widetilde{f}\| \leqslant \varepsilon$ and $\|T(g) - \widetilde{g}\| \leqslant \varepsilon$.
\end{theorem}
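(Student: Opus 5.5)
Throughout, the idea is to compare $T$ with the modular functional $\Phi_i(h)=\sum_k M_i(|h(k)|)$ instead of working directly with the Luxemburg norm. The norm is determined by $\Phi$: $\|h\|=1$ iff $\Phi(h)=1$. Under the $\Delta_{2++}$-condition at $0$, which I read as a two-sided polynomial-type control of $tM'(t)/M(t)$ near $0$ (both above $1$ and bounded), the map $\rho\mapsto\Phi(h/\rho)$ is uniformly bi-Lipschitz-like near $\rho=\|h\|$. Hence a $(1+\delta)$-embedding nearly preserves modulars: $|\Phi_2(T u)-\Phi_1(u)|\le\eta(\delta)$ for unit vectors $u$, with $\eta\to 0$. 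Since both spaces are sequence spaces, only small coordinates matter on the unit sphere ($|h(k)|\le\|h\|$). Therefore only the behaviour of $M_i$ on $[0,1]$ is relevant, and compactness arguments on $[0,1]$ are available.

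\textbf{Step 1: reduction to a two-dimensional problem.} Fix disjoint $f,g$; by homogeneity take them normalized. Consider the function $\varphi_1(s,t)=\Phi_1(sf+tg)$. By disjointness it splits as $\Phi_1(sf)+\Phi_1(tg)$ and has no ``mixed'' terms. On the target side, $\varphi_2(s,t)=\sum_k M_2(|s a_k+t b_k|)$ with $a=Tf$, $b=Tg$, and we know $|\varphi_2-\varphi_1|$ is small on a neighbourhood of the relevant compact set of $(s,t)$.

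\textbf{Step 2: a quantitative functional forcing disjointness.} Here the hypotheses $\mathcal C^3$, $M_2^{(3)}$ increasing and $M_2(t)=o(t^3)$ enter. I would build a finite-difference expression $D(s,t)$ that vanishes identically for split functions $\psi(s)+\chi(t)$, such as the mixed second difference $\varphi(s,t)-\varphi(s,-t)-\varphi(-s,t)+\varphi(-s,-t)$, or a third-order variant. By the symmetry $M(|x|)$ this is $\sum_k \big[M_2(|sa_k+tb_k|)+M_2(|sa_k-tb_k|)-2\,\cdots\big]$-type terms. For each coordinate, convexity of $M_2''$ (since $M_2^{(3)}$ is increasing) together with $M_2'''(0)=0$, which follows from $o(t^3)$, should give a lower bound of the form
\[
|D_k|\ \geq\ c\,\min\big(M_2(|a_k|),M_2(|b_k|)\big)\cdot(\text{something positive}),
\]
in the spirit of the inequality $|x+y|^p+|x-y|^p-2|x|^p-2|y|^p$ used for $\ell_p$. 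Thus $D$ is small on the source side (it is exactly $0$ up to $\eta$) and dominates $\sum_k \min(M_2(a_k),M_2(b_k))$ on the target side. This step is the main obstacle. I must choose the difference operator and the scale parameters $s,t$ so that the lower bound is uniform over all coordinate sizes in $[0,1]$. That is presumably where $\Delta_{2++}$ is used again, to compare $M_2(\lambda x)$ with $M_2(x)$ uniformly. I would first try the pure $L_p$ inequality via the substitution $x=\lambda y$ and a compactness argument on $\lambda\in[0,1]$ after normalizing by $M_2(\max)$.

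\textbf{Step 3: construct $\tilde f,\tilde g$.} Let $A=\{k:|a_k|\ge|b_k|\}$ and set $\tilde f=a\ind_A$, $\tilde g=b\ind_{A^c}$. These are disjoint, and $\Phi_2(a-\tilde f)+\Phi_2(b-\tilde g)\le\sum_k\min(M_2(|a_k|),M_2(|b_k|))\le C\eta(\delta)$. Converting small modular back into small norm, again via $\Delta_2$ at $0$, gives $\|Tf-\tilde f\|,\|Tg-\tilde g\|\le\varepsilon$ for $\delta$ small. For the unnormalized case, scale. Uniformity in $X$ ($\ell_{M_1}$ or $\ell_{M_1}^n$) is automatic, since only $f,g$ and their span were used.
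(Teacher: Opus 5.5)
Your Step~1 rests on a claim that is false in general, and Step~2 cannot be carried out without it. A $(1+\delta)$-embedding controls only norms: it tells you that $\|sa+tb\|_{M_2}$ is within a factor $1+\delta$ of $\|sf+tg\|_{M_1}$. In terms of modulars, this only says that the level set $\{\varphi_2=1\}$ is close to $\{\varphi_1=1\}$.

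Off the unit sphere, the modular of a vector is in general not a function of its norm; it is only when $M$ is a power. It depends on how the vector is distributed. For example, take $M_1=M_2=M$ with $M(t)=t^pe^{t-1}$, an integer $m\geqslant 2$, $\lambda=M^{-1}(1/m)$ and $a=\lambda(e_1+\dots+e_m)$. Then $e_1\mapsto a$ is an exact isometric embedding of $\ell_M^1$. Yet $\Phi(se_1)=s^pe^{s-1}$ while $\Phi(sa)=s^pe^{\lambda(s-1)}$, and these differ at $s=1/2$ by an amount that has nothing to do with $\delta$.

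Any finite difference of the type you propose, e.g.\ $\varphi(s,t)+\varphi(s,-t)-2\varphi(s,0)-2\varphi(0,t)$, evaluates $\Phi_2$ at $sa\pm tb$, $sa$ and $tb$. These cannot all have norm $1$, so nothing controls $D$ on the target side. Moreover, $D_2$ vanishes for disjoint $a,b$, and by your own Steps~2--3 its smallness forces approximate disjointness. So ``$D_2$ is small'' is essentially the theorem itself. The only reason you give for it, $D_2\approx D_1=0$, is the unavailable modular transfer. Your argument does work when $M_1=M_2=t^p$, because then $\Phi=\|\cdot\|^p$ is norm-determined (this is the classical Clarkson-type proof). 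That is exactly what breaks for general Orlicz functions.

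The missing ingredient is a disjointness detector built from the norm alone. The paper uses $N_{f,g}(\alpha)=\|f+\alpha g\|$, defined implicitly by $F_{f,g}(\alpha,\eta)=\sum_k M(|f(k)+\alpha g(k)|/\eta)-1=0$.

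The full $\Delta_{2++}$-condition (bounds on $tM^{(i)}/M^{(i-1)}$ for $i=1,2,3$, not only $i=1$) and the monotonicity of $M^{(3)}$ bound all partial derivatives of $F_{f,g}$ up to order $3$. These bounds are uniform over $4/5\leqslant\|f\|,\|g\|\leqslant 5/4$, which gives a uniform bound on $N_{f,g}'''$.

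For disjoint unit vectors $u,v\in X$ one has $N_{u,v}'(0)=0$. Also $\partial_\alpha^2F_{u,v}(0,1)=\sum_k|v(k)|^2M_1''(|u(k)|)=0$ because $M_1''(0)=0$, so $N_{u,v}''(0)=0$ and $\|u+\alpha v\|\leqslant 1+C|\alpha|^3$.

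On the target side, suppose $Tu,Tv$ are not $\varepsilon$-close to a disjoint pair. The same splitting $A=\{|a_k|\geqslant|b_k|\}$ as in your Step~3, combined with $t^2M_2''(t)\geqslant M_2(t)$, yields $\partial_\alpha^2F_{Tu,Tv}(0,\|Tu\|)\geqslant h(\varepsilon)>0$. Through the implicitly differentiated identity, this forces $|N'_{Tu,Tv}(0)|$ or $N''_{Tu,Tv}(0)$ to be bounded below. Hence $\|Tu\pm\alpha_0Tv\|\geqslant\|Tu\|+c\,\alpha_0^2$ for one sign, with $\alpha_0$ fixed and $C\alpha_0\ll h(\varepsilon)$. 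This contradicts $\|T\|\leqslant 1+\delta$, since $\|u\pm\alpha_0 v\|\leqslant 1+C\alpha_0^3$.

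So your instinct to exploit the size of $M_2''$ on the overlap is right. It has to be fed into the second derivative of the norm, which is an isometric invariant, rather than into a modular finite difference.
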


\noindent The \textit{$\Delta_{2++}$-condition at $0$} is a strengthening of the $\Delta_2$-condition at $0$ introduced above (in particular, the spaces we consider are separable). It ensures both a polynomial-like growth and regularity in a neighborhood of $0$; see Definition \ref{def:Delta2PP}. It resembles Randrianantoanina's $\Delta_{2+}$-condition at infinity from Theorem \ref{thm:Beata}, but is somewhat stricter in terms of regularity. Our proof itself follows the same ideas as Randrianantoanina's proof of Theorem \ref{thm:Beata}. As a consequence of the Banach--Lamperti theorem and a result of Schechtman \cite{Schechtman}, the conclusion of Theorem \ref{thm:MainDisjointnessPreservation}
is known to hold when $M_1(t) = M_2(t) = t^p$ for \textit{any} $1 \leqslant p \neq 2 < \infty$, and not just for $3 < p < \infty$ as our assumptions on the $M_i$'s seem to suggest. These assumptions are likely far from being optimal, and may be due to a defect in our methods. We hope to improve them in a future version of this preprint. Theorem \ref{thm:MainDisjointnessPreservation} admits the following immediate corollary.

\begin{corollary}\label{cor:DisjointnessPreservation}
    Let $M_1$ and $M_2$ be Orlicz functions satisfying the same assumptions as in Theorem \ref{thm:MainDisjointnessPreservation}, and let $X = \ell_{M_1}$ or $\ell_{M_1}^n$ for some $n \in \N$. Then all isometric embeddings $X \to \ell_{M_2}$ preserve disjointness.
\end{corollary}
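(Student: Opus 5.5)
The corollary follows from Theorem \ref{thm:MainDisjointnessPreservation} by letting $\varepsilon \to 0$ and using a compactness-free closure argument. Fix an isometric embedding $T \colon X \to \ell_{M_2}$ and disjoint $f, g \in X$; we must show that $T(f)$ and $T(g)$ are disjoint. Since $T$ is an exact isometric embedding, it belongs to $\Emb_{1+\delta}(X, \ell_{M_2})$ for every $\delta > 0$. Hence for every $\varepsilon > 0$ the theorem provides disjoint $\widetilde{f}_\varepsilon, \widetilde{g}_\varepsilon \in \ell_{M_2}$ with $\|T(f) - \widetilde{f}_\varepsilon\| \leqslant \varepsilon$ and $\|T(g) - \widetilde{g}_\varepsilon\| \leqslant \varepsilon$.

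The remaining step is to check that the set of disjoint pairs $\{(u,v) \in \ell_{M_2}^2 \mid u \text{ and } v \text{ are disjoint}\}$ is closed in $\ell_{M_2} \times \ell_{M_2}$. Convergence in $\ell_{M_2}$ implies coordinatewise convergence, because each coordinate functional is bounded. Indeed, for any $u \in \ell_{M_2}$ and $k \in \N$ we have $|u(k)| \leqslant \|u\|_{M_2}\,\|e_k\|_{M_2}^{-1}$: this follows from $M_2(|u(k)|/\rho) \leqslant \sum_j M_2(|u(j)|/\rho)$ together with monotonicity of $M_2$. Disjointness means $u(k)v(k) = 0$ for every $k$, and each condition $u(k)v(k) = 0$ is preserved under coordinatewise limits.

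Now take $\varepsilon = 1/m$ and let $m \to \infty$. Then $\widetilde{f}_{1/m} \to T(f)$ and $\widetilde{g}_{1/m} \to T(g)$, so for each $k$ we get $T(f)(k)\,T(g)(k) = \lim_m \widetilde{f}_{1/m}(k)\,\widetilde{g}_{1/m}(k) = 0$. Hence $T(f)$ and $T(g)$ are disjoint. There is no real obstacle here. The only point requiring care is the continuity of the coordinate functionals, which is immediate from the definition of the Luxemburg norm and holds regardless of the $\Delta_{2++}$ assumptions.
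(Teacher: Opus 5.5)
Your argument is correct and is exactly the limiting argument the paper intends when it calls this an immediate corollary of Theorem~\ref{thm:MainDisjointnessPreservation}. An exact isometric embedding lies in every $\Emb_{1+\delta}(X,\ell_{M_2})$. Letting $\varepsilon\to 0$, together with the boundedness of the coordinate functionals (which you correctly derive from the Luxemburg norm), then forces $T(f)(k)\,T(g)(k)=0$ for every $k$.
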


For arbitrary Orlicz functions, we cannot hope to get better than disjointness preservation results in general, since the $\ell_p$ spaces have a wealth of self-isometric embeddings (the canonical basis can be mapped to any pairwise disjoint family of norm-$1$ vectors). However, if we strengthen our conditions to avoid $\ell_p$ spaces, we can get an even stronger rigidity result: almost isometric embeddings almost preserve basic vectors, up to a change of signs. Here, by a \textit{basic vector} of $\ell_M^n$ or $\ell_M$, we mean a vector of the canonical basis.

\begin{theorem}\label{thm:MainBasisPreservation}
     Let $M$ be an Orlicz function satisfying the same assumptions as in Theorem \ref{thm:MainDisjointnessPreservation}. Suppose additionally that $M(1) = 1$, and for every $\varepsilon > 0$, there exists $\alpha(\varepsilon) > 1$ such that for every $0< t,\, u \leqslant 1 - \varepsilon$, it holds that $M(tu) \geqslant \alpha(\varepsilon)M(t)M(u)$. Let $X = \ell_{M}$ or $\ell_{M}^n$ for some $n \in \N$. Then for every $\varepsilon > 0$, there exists $\delta > 0$ such that for every $T \in \Emb_{1 + \delta}(X, \ell_M)$ and every basic vector $e$ of $X$, there exist a basic vector $f$ of $\ell_M$ and a sign $\theta \in \{-1, 1\}$ for which $\|T(e) - \theta f\| \leqslant \varepsilon$.
\end{theorem}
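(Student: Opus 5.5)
We will deduce Theorem \ref{thm:MainBasisPreservation} from Theorem \ref{thm:MainDisjointnessPreservation} (applied with $M_1 = M_2 = M$) together with a quantitative use of the super-multiplicativity assumption $M(tu) \geqslant \alpha(\varepsilon) M(t)M(u)$. We argue by contradiction. Suppose there are $\varepsilon > 0$, embeddings $T_k \in \Emb_{1+1/k}(X, \ell_M)$ and basic vectors $e^{(k)}$ of $X$ such that $T_k(e^{(k)})$ stays at distance $> \varepsilon$ from every $\pm f$, with $f$ a basic vector of $\ell_M$. By $1$-symmetry of the basis we may assume $e^{(k)} = e_1$. Note that in $X$, $e_1$ and $e_2$ are disjoint and $\|e_1 + e_2\|_M = \|e_1 - e_2\|_M = \|e_1\| \cdot c$, where $c = 1/M^{-1}(1/2)$. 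The idea is that, among norm-one vectors $x$ of $\ell_M$, the basic vectors (up to sign) are the only ones that can be paired with a disjoint norm-one $y$ so that $\|x \pm y\|$ equals this exact constant $c$; and this characterization should be stable.

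\textbf{Step 1 (reduction to disjoint pairs).} By Theorem \ref{thm:MainDisjointnessPreservation}, for $k$ large, $x_k = T_k(e_1)$ and $y_k = T_k(e_2)$ are $\eta$-close to disjoint vectors $\tilde x_k, \tilde y_k$, with $\eta \to 0$. After normalizing, we obtain disjoint norm-one vectors $\tilde x_k, \tilde y_k$ with $\|\tilde x_k + \tilde y_k\|_M \to c$, while $\tilde x_k$ stays at distance $\geqslant \varepsilon/2$ from $\pm$ basic vectors.

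\textbf{Step 2 (key inequality).} For disjoint norm-one $x, y \in \ell_M$ we have $\sum M(|x_i|) = \sum M(|y_i|) = 1$ (using $\Delta_2$ at $0$). Then $\|x+y\| \leqslant \rho$ iff $\sum M(|x_i|/\rho) + \sum M(|y_i|/\rho) \leqslant 1$. Writing $s = 1/c = M^{-1}(1/2) < 1$, super-multiplicativity gives
\[
\sum_i M(s|x_i|) \geqslant \alpha \, M(s) \sum_{i :\, |x_i| \leqslant 1-\varepsilon'} M(|x_i|) + \sum_{i :\, |x_i| > 1-\varepsilon'} M(s|x_i|).
\]
If $x$ is far from $\pm$ basic vectors, then since $\sum M(|x_i|) = 1$ and $M(1) = 1$, a definite portion $\gamma > 0$ of the mass $\sum M(|x_i|)$ sits on coordinates with $|x_i| \leqslant 1-\varepsilon'$ (for suitable $\varepsilon'$ depending on $\varepsilon$). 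The remaining coordinates satisfy $M(s|x_i|) \geqslant M(s)M(|x_i|)$ by convexity/limit of the hypothesis; one needs $M(tu) \geqslant M(t)M(u)$ on $[0,1]^2$, which follows from the hypothesis by continuity. Hence $\sum M(s|x_i|) \geqslant (1 + (\alpha-1)\gamma) \cdot \tfrac12$, and the same bound with $\gamma = 0$ holds for $y$. Thus $\sum M(s|(x+y)_i|) \geqslant 1 + (\alpha-1)\gamma/2 > 1$. By $\Delta_2$ at $0$ (which gives a lower bound on how fast the modular changes under scaling), this yields $\|x+y\|_M \geqslant c(1+\kappa)$ for some $\kappa > 0$ depending only on $\varepsilon$, contradicting Step 1.

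\textbf{Step 3 (conclusion).} Having shown that $\tilde x_k$ is close to some $\theta f$, we get $\|T_k(e_1) - \theta f\| \leqslant \varepsilon$ for large $k$, which is the desired contradiction. The case $X = \ell_M^1$ (no second basic vector) needs separate care: there we would rather use that $T$ is a $(1+\delta)$-embedding of a one-dimensional space, which gives no rigidity. Since the statement must hold for it too, we expect the argument to allow $X$ with $n \geqslant 2$ only, or else to treat $n = 1$ as trivially vacuous/excluded.

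\textbf{Main obstacle.} We expect the main difficulty to be Step 2's quantitative "mass away from $1$" claim. One must show that being $\varepsilon$-far in Luxemburg norm from every $\pm e_j$ forces $\sum_{|x_i| \leqslant 1-\varepsilon'} M(|x_i|) \geqslant \gamma(\varepsilon)$. The plan is as follows. If a coordinate has $|x_j| > 1-\varepsilon'$, then $M(|x_j|)$ is close to $1$, so the rest of the modular is small, and $\Delta_2$ converts a small modular into a small norm. Otherwise all coordinates are $\leqslant 1-\varepsilon'$, and $\gamma = 1$.
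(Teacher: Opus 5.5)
Your argument is correct, and it uses the paper's ingredients. First, Theorem~\ref{thm:MainDisjointnessPreservation} replaces $T(e_1),T(e_2)$ by disjoint normalized vectors. Second, the dichotomy of Fact~\ref{fact1} applies: a coordinate of modulus $>1-\varepsilon'$ forces closeness to some $\pm e_j$, and otherwise your $\gamma$ is simply $1$, so your ``main obstacle'' is exactly that fact. Third, super-multiplicativity bounds the modular of a disjoint sum at the scale $\|e_1\pm e_2\|=1/M^{-1}(1/2)$.

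Where you differ is the decomposition. The paper first proves Lemma~\ref{lemma}: it applies the strict factor $\alpha$ to \emph{both} normalized images to conclude that at least one of them is close to a signed basic vector. It then reruns the estimate with that basic vector to catch the other image. You observe that the strict factor is needed only on the vector under scrutiny. Any disjoint unit vector $y$ contributes at least $M(s)\sum_i M(|y_i|)=1/2$ through the weak inequality $M(tu)\geqslant M(t)M(u)$. So the modular at scale $c$ is at least $(1+\alpha)/2$ in a single pass. This makes Lemma~\ref{lemma} superfluous and is a genuine simplification of the paper's two-stage argument.

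Two details should be made explicit.
\begin{enumerate}
\item Applying the hypothesis with $t=s$ requires $\varepsilon'\leqslant 1-s$; the paper imposes $h(\varepsilon/2)\leqslant 1-1/r$ for the same reason.
\item The passage from modular excess to norm excess is cleanest as in the paper, via $M(\lambda u)\geqslant M(\lambda)M(u)$ with $M(\lambda)\to M(1)=1$ as $\lambda\to 1$. Your $\Delta_2$ route also works: the bound $tM'(t)\leqslant KM(t)$ on $(0,15]$ from Section~\ref{sec:2} integrates to $M(\lambda t)\geqslant\lambda^K M(t)$.
\end{enumerate}

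Finally, the case $n=1$ is not vacuous but false. The map $ae_1\mapsto a(e_1+e_2)/\|e_1+e_2\|$ is an isometric embedding of $\ell_M^1$ into $\ell_M$. It sends $e_1$ to a vector at distance at least $M^{-1}(1/2)$ from every $\pm e_j$. So $n\geqslant 2$ must be assumed, which the paper's reduction to $\ell_M^2$ also tacitly does.
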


\noindent Note that the assumptions of Theorem \ref{thm:MainBasisPreservation} are for instance satisfied when $M(t) = t^pe^{t-1}$ for any $3 < p < \infty$. Theorem \ref{thm:MainBasisPreservation} admits the two following corollaries.

\begin{corollary}\label{cor:BasisPreservation}
    Let $M$ be an Orlicz function satisfying the same assumptions as in Theorem \ref{thm:MainBasisPreservation}. Let $X = \ell_{M}$ or $\ell_{M}^n$ for some $n \in \N$. Then all isometric embeddings $X \to \ell_M$ map basic vectors to multiples of basic vectors.
\end{corollary}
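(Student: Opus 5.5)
The corollary should follow directly from Theorem \ref{thm:MainBasisPreservation} by letting $\varepsilon \to 0$, using only that an exact isometric embedding belongs to every $\Emb_{1+\delta}(X, \ell_M)$. Fix an isometric embedding $T \colon X \to \ell_M$ and a basic vector $e$ of $X$. Since $\|T\| = \|T^{-1}\| = 1$, we have $T \in \Emb_{1+\delta}(X, \ell_M)$ for every $\delta > 0$. Hence for every $k \in \N$, applying Theorem \ref{thm:MainBasisPreservation} with $\varepsilon = 1/k$ gives a basic vector $f_k$ of $\ell_M$ and a sign $\theta_k \in \{-1,1\}$ with $\|T(e) - \theta_k f_k\| \leqslant 1/k$.

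The next step is to show that the sequence $(\theta_k f_k)$ takes only finitely many values, and then to pass to a constant subsequence. Since $\theta_k f_k \to T(e)$, the sequence is Cauchy. Distinct elements of $\{\pm e_n\}$ are uniformly separated in $\ell_M$. Indeed, by $1$-symmetry of the basis, $\|e_n - e_m\| = \|e_1 + e_2\|$ for $n \neq m$, and this common value is at least $\|e_1\| = 1$ because the norm is a lattice norm. Also $\|e_n + e_n\| = 2$ (the normalization $M(1)=1$ gives $\|e_n\| = 1$). So any two distinct vectors among $\pm e_n$ are at distance at least $1$. A Cauchy sequence in a $1$-separated set is eventually constant, so $\theta_k f_k = \theta f$ for all large $k$. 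Letting $k \to \infty$ gives $T(e) = \theta f$, which is in particular a multiple of a basic vector.

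There is no substantial obstacle here. The one point needing care is the uniform separation of the signed basic vectors. Alternatively, one can avoid it: $\|T(e)\| = 1$, so $T(e) \neq 0$. If $T(e)$ were not a multiple of a basic vector, its distance to the closed set $\{\pm e_n : n\in\N\}$ would be positive, since each $\pm e_n$ is itself a multiple of a basic vector and $T(e)$ differs from all of them. A positive distance contradicts the approximations by $\theta_k f_k$ with error $1/k \to 0$. Closedness of this set again comes from the separation, so the separation argument above is the cleanest route.
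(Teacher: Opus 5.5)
Your proof is correct and is essentially the argument the paper intends: the paper states this corollary as an immediate consequence of Theorem \ref{thm:MainBasisPreservation}, since an isometric embedding lies in every $\Emb_{1+\delta}(X,\ell_M)$. Your observation that the signed basic vectors are $1$-separated, so that the approximants $\theta_k f_k$ stabilize, supplies the only detail the paper leaves implicit, and it even gives the slightly stronger conclusion $T(e)=\pm f$ for a basic vector $f$.
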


\begin{corollary}\label{cor:OrliczFraisse}
    Let $M$ be an Orlicz function satisfying the same assumptions as in Theorem \ref{thm:MainBasisPreservation}. Then the space $\ell_M$ is cofinally Fra\"iss\'e.
\end{corollary}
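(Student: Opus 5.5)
We need to prove Corollary \ref{cor:OrliczFraisse}: $\ell_M$ is cofinally Fra\"iss\'e. The definition (Definition \ref{def:CofFraisse}, stated later) presumably asks that for every finite-dimensional $E$ and $\varepsilon>0$ there be a larger finite-dimensional $F\supseteq E$, coming from a cofinal family, and a $\delta>0$ such that $\Iso(\ell_M)$ acts $\varepsilon$-transitively on $\Emb_{1+\delta}(F,\ell_M)$. We take the cofinal family to be the spaces $F_n=\Span(e_1,\dots,e_n)\cong\ell_M^n$. Their union is dense, so any finite-dimensional $E$ is, up to a small perturbation, contained in some $F_n$; a standard small-perturbation argument then gives guarded and cofinal versions for all $E$.

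\textbf{Step 1: reduce to basic vectors.} Fix $n$ and $\varepsilon>0$, and set $\eta=\varepsilon/n$. Apply Theorem \ref{thm:MainBasisPreservation} to $X=\ell_M^n$ with tolerance $\eta$. This gives $\delta>0$ such that for every $T\in\Emb_{1+\delta}(\ell_M^n,\ell_M)$ and every $i\le n$ there are an index $\sigma(i)$ and a sign $\theta_i\in\{-1,1\}$ with $\|T(e_i)-\theta_i e_{\sigma(i)}\|\le\eta$.

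\textbf{Step 2: check $\sigma$ is injective.} Suppose $\sigma(i)=\sigma(j)$ with $i\ne j$. Then $T(e_i)\mp T(e_j)$ has norm at most $2\eta$ for a suitable sign. But $\|e_i\mp e_j\|=\|e_1+e_2\|_M=1/M^{-1}(1/2)>1$, so $\|T(e_i)\mp T(e_j)\|\ge(1+\delta)^{-1}\|e_i\mp e_j\|>1/2$. This is a contradiction once $\eta<1/4$, which we may assume.

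\textbf{Step 3: build the isometry.} Since the basis is $1$-symmetric, any permutation of $\N$ extending $\sigma$, combined with the sign changes $\theta_i$ (and $+1$ elsewhere), defines a surjective isometry $U$ of $\ell_M$ with $U(e_i)=\theta_i e_{\sigma(i)}$ for $i\le n$. Then $\|T(e_i)-U(e_i)\|\le\eta$ for each $i$.

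\textbf{Step 4: estimate on the whole of $F_n$.} For $x=\sum_{i\le n}a_ie_i$ with $\|x\|\le1$ we have $|a_i|\le 1$, since $\|a_ie_i\|\le\|x\|$ by $1$-unconditionality. Hence
\[
\|(T-U)x\|\le\sum_{i\le n}|a_i|\,\eta\le n\eta=\varepsilon,
\]
so $\|T-U_{\restriction F_n}\|\le\varepsilon$. This is exactly $\varepsilon$-transitivity of $\Iso(\ell_M)$ on $\Emb_{1+\delta}(F_n,\ell_M)$: any two embeddings are each $\varepsilon$-close to restrictions of isometries, hence $2\varepsilon$-close to each other's orbit, and we rescale $\varepsilon$.

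\textbf{Main obstacle.} The only delicate point is matching the precise form of Definition \ref{def:CofFraisse}. If it requires handling arbitrary $E$ rather than a cofinal family, we need the perturbation argument. Given $E$, choose $n$ and a $(1+\gamma)$-isomorphism $P\colon E\to F_n$ close to the identity; this exists because $\bigcup_n F_n$ is dense. We then transfer the statement through $P$, using that isometries are $1$-Lipschitz on embeddings. This may require the guarded form, with $F_n\supseteq P(E)$ playing the role of the guard.
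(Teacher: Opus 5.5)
Your proof is correct and is essentially the paper's argument: the cofinal class $\{F_n\}$, Theorem \ref{thm:MainBasisPreservation} with tolerance of order $\varepsilon/n$, injectivity of the index map, a signed-permutation isometry, and the coordinatewise estimate via $1$-unconditionality. The differences are minor: you compare each embedding with a restricted isometry (losing a harmless factor $2$) rather than building the isometry that carries $T_1$ to $T_2$ directly, and your injectivity check via $T(e_i)\mp T(e_j)$ explicitly covers the opposite-sign case, which the paper's appeal to $\|T(e_k)-T(e_l)\|$ alone glosses over. Your closing ``obstacle'' paragraph is unnecessary: Definition \ref{def:CofFraisse} only asks for $\varepsilon$-transitivity on members of a cofinal class, and that is exactly what your Steps 1--4 establish once you note, as you do, that $\bigcup_n F_n$ is dense.
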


The following result finally answers \cite[Questions 5 and 6]{CdRD}.

\begin{theorem}\label{thm:Counterexample}
    Let $M(t) = t^pe^{t-1}$, where $3 < p < \infty$. Then the Banach space $\ell_M$ is guarded Fra\"iss\'e, but it is not oligomorphic, does not contain isometric (and even isomorphic) copies of $\ell_2$, and has a non-closed age.
\end{theorem}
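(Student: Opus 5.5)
The plan is to treat the four claims separately, relying on Corollary \ref{cor:OrliczFraisse} and on the fact that $M(t)=t^pe^{t-1}$ with $3<p<\infty$ satisfies the hypotheses of Theorem \ref{thm:MainBasisPreservation}, as noted after that theorem.

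\textbf{Guarded Fra\"iss\'e.} Cofinally Fra\"iss\'e spaces are guarded Fra\"iss\'e. So this part follows directly from Corollary \ref{cor:OrliczFraisse}, provided Definition \ref{def:CofFraisse} gives the implication. If it does not, I would argue directly: given $E$, take $F$ containing $E$ and a span $\ell_M^n$ of basic vectors that almost contains $E$. Any $T\in\Emb_{1+\delta}(F,\ell_M)$ sends each basic vector close to $\pm$ a basic vector by Theorem \ref{thm:MainBasisPreservation}. These image vectors are distinct, because the images of distinct basic vectors are almost at distance $M^{-1}$-norm apart, as for $\|e_i\pm e_j\|$. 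A signed permutation of the basis is an isometry of $\ell_M$ that approximately realizes $T$ on $E$.

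\textbf{No copy of $\ell_2$.} I would compute the Boyd/Matuszewska indices of $M$ at $0$. Since $M(t)\sim e^{-1}t^p$ as $t\to0$, the function $M$ is equivalent at $0$ to $t^p$. Hence $\ell_M\simeq\ell_p$ isomorphically, by \cite[Chapter 4]{LT}: equivalence of Orlicz functions near $0$ gives the same sequence space. Since $p\neq 2$, the space $\ell_p$ contains no isomorphic copy of $\ell_2$, as every subspace of $\ell_p$ contains $\ell_p$. Oligomorphic spaces contain isometric copies of $\ell_2$ by Dvoretzky, so $\ell_M$ is not oligomorphic.

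\textbf{Non-closed age.} This is the most delicate part. I would use $E=\ell_p^2$, or better $\ell_p^n$ with the norm scaled. Consider the vectors $x_k=\lambda_k(e_1+\dots+e_k)$ normalized in $\ell_M$. For disjoint blocks spread over many coordinates with small entries, the norm behaves like the $\ell_p$ norm up to a factor tending to $1$, because $e^{t-1}\to e^{-1}$ uniformly on small $t$. Thus normalized flat disjoint blocks $u_1,u_2$ with support size $k\to\infty$ span spaces converging in Banach--Mazur distance to $\ell_p^2$. So $\ell_p^2$ lies in the closure of $\Age(\ell_M)$.

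It remains to show $\ell_p^2$ does not embed isometrically. Suppose $S\colon\ell_p^2\to\ell_M$ is isometric. The natural tool is Corollary \ref{cor:DisjointnessPreservation} with $M_1=t^p$ and $M_2=M$. This is not directly allowed for $p\le 3$, but here $p>3$ and $t^p$ satisfies the hypotheses, including $\Delta_{2++}$. So $S(e_1),S(e_2)$ are disjoint with norm $1$, and $\|S(e_1)+aS(e_2)\|_M=(1+|a|^p)^{1/p}$ for all $a$. Write $f=S(e_1)$. The function $a\mapsto\int M(|f|/\rho)+\int M(|a||g|/\rho)$ must equal $1$ at $\rho=(1+|a|^p)^{1/p}$. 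Letting $a\to0$ and expanding to second order in $\rho-1$ should force $M$ to be $p$-homogeneous on the range of $|f|$, that is $M(st)=s^pM(t)$ for $s$ near $1$. This fails for $t^pe^{t-1}$ at any $t>0$, since $t\mapsto tM'(t)/M(t)=p+t$ is nonconstant. Concretely, I would compare the first order derivative in $a$ near $a\to0$, which involves $\int |f|M'(|f|)$, with the required value $p$. Then I would use $\int M(|f|)=1$ and $\int|f|M'(|f|)=p+\int |f|M(|f|)>p$, unless $f=0$, which gives a contradiction.

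The main obstacle is making this derivative computation rigorous. I would differentiate the implicit equation $\Phi(a,\rho)=1$ in $\rho$ and $a$ and use the $\Delta_2$ condition to justify differentiation under the sum.
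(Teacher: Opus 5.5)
Your first two claims follow the paper's route. Guarded Fra\"iss\'eness comes from Corollary~\ref{cor:OrliczFraisse} together with \cite[Proposition 8.15]{CdRD}. The bounds $e^{-1}t^p\leqslant M(t)\leqslant t^p$ on $[0,1]$ give $\ell_M\simeq\ell_p$, hence no copy of $\ell_2$ and no oligomorphy. For $\ell_p^2\in\overline{\Age(\ell_M)}$ you use normalized flat disjoint blocks instead of the Boyd indices and \cite[Theorem 4.a.9]{LT}. This is correct and more elementary: $e^{-1}t^p\leqslant M(t)\leqslant e^{\eta-1}t^p$ on $[0,\eta]$, so on vectors with entries at most $\eta$, $\|\cdot\|_M$ is within a factor $e^{\eta/p}$ of $e^{-1/p}\|\cdot\|_p$. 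You also correctly get disjointness of $f=S(e_1)$, $g=S(e_2)$ from Corollary~\ref{cor:DisjointnessPreservation} with $M_1(t)=t^p$, as the paper does.

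The gap is in the step you flag as the main obstacle: you misevaluate the contribution of $g$. The derivative in $a$ at $0$ carries no information, since $\rho(a)-1\sim|a|^p/p$; the right variable is $s=|a|^p$. Since $M(t)/t^p\to e^{-1}$ at $0$, you get
\[
\sum_k M\bigl(s^{1/p}|g(k)|/\rho\bigr)=s\,e^{-1}\sum_k|g(k)|^p+o(s),
\]
not $s+o(s)$. The quantity equal to $1$ is $\sum_kM(|g(k)|)$, not $e^{-1}\sum_k|g(k)|^p$. So the order-$s$ identity is
\[
\sum_k|f(k)|M'(|f(k)|)=p\,e^{-1}\sum_k|g(k)|^p,
\]
and the ``required value'' is not $p$. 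Your bound $\sum_k|f(k)|M'(|f(k)|)=p+\sum_k|f(k)|M(|f(k)|)>p$ still gives a contradiction, but only once you also prove $e^{-1}\sum_k|g(k)|^p<\sum_kM(|g(k)|)=1$. Separately, a single scalar identity like this cannot ``force $M$ to be $p$-homogeneous on the range of $|f|$''; drop that heuristic.

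The missing ingredient is strict sub-$p$-homogeneity: $M(\lambda t)=\lambda^pt^pe^{\lambda t-1}<\lambda^pM(t)$ for $0<\lambda<1$ and $t>0$. With it, no differentiation is needed. Take $\lambda=2^{-1/p}$, so that $\|\lambda f+\lambda g\|_M=\|\lambda e_1+\lambda e_2\|_p=1$. Disjointness then gives
\[
1=\sum_kM(\lambda|f(k)|)+\sum_kM(\lambda|g(k)|)<\lambda^p\Bigl(\sum_kM(|f(k)|)+\sum_kM(|g(k)|)\Bigr)=1,
\]
a contradiction. This is exactly the paper's argument in Proposition~\ref{age}, and it avoids justifying differentiation under the sum.
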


The rest of this paper is organized as follow. In Section \ref{sec:2}, we prove Theorem \ref{thm:MainDisjointnessPreservation}. In Section \ref{sec:3}, we prove Theorem \ref{thm:MainBasisPreservation}. Finally, in Section \ref{sec:4}, we review some notions related to Fra\"iss\'e theory and oligomorphy, and we prove Corollary \ref{cor:OrliczFraisse} and \ref{thm:Counterexample}.

\bigskip

\section{Disjointness preservation results}\label{sec:2}

We start with defining the $\Delta_{2++}$-condition used in the statement of Theorem \ref{thm:MainDisjointnessPreservation}.

\begin{definition}\label{def:Delta2PP}
    An Orlicz function $M$ of class $\mathcal{C}^3$ on $(0, +\infty)$ is said to satisfy the \textit{$\Delta_{2++}$-condition at $0$} if for every $i \in \{1, 2, 3\}$, it holds that $$\limsup_{t \to 0}\frac{tM^{(i)}(t)}{M^{(i-1)}(t)} < \infty.$$
\end{definition}

It is easily seen that the above condition for $i=1$ implies the $\Delta_2$-condition at $0$. In particular, it ensures mild growth in a neighborhood of $0$. However, the same condition of $i = 2$ and $3$ also ensures some degree of regularity at the neighborhood for $0$, avoiding too fast oscillations. For a more detailed analysis, we refer the reader to \cite[Section 2]{Beata}, where a related condition, the $\Delta_{2+}$-condition at infinity, is discussed.

\smallskip

This section is devoted to the proof of Theorem \ref{thm:MainDisjointnessPreservation}. For simplicity, an Orlicz function satisfying the assumptions of this theorem will be called \textit{good}. Note that if $M$ is a good Orlicz function, then for every $0 \leqslant i \leqslant 3$, $M^{(i)}(0) = 0$ and $M^{(i)}(t) > 0$ whenever $t > 0$; in particular, the function $t \mapsto M(|t|)$ is $\mathcal{C}^3$ on $\R$. Moreover, there exists $K>1$ such that for any $0<t\leqslant 15 $ and $1 \leqslant i \leqslant 3$, we have 
$$\frac{tM^{(i)}(t)}{M^{(i-1)}(t)}\leqslant K.$$

For the five first steps of this proof, we fix a good Orlicz function $M$. Let $f,g\in \ell_M$ be such that $4/5\leqslant \Vert f\Vert\leqslant5/4 $ and $4/5\leqslant\Vert g\Vert\leqslant5/4 $. 

\smallskip
We define $F_{f,g} : (-1/2,1/2)\times (1/8,2) \to \mathbb{R}$  by \[F_{f,g}(\alpha,\eta) =\sum_{k=1}^\infty a_k(\alpha,\eta)-1,\]where $a_k(\alpha,\eta)=M\biggl(\frac{\vert f(k)+\alpha g(k)\vert }{\eta}\biggr)$. Note that for any $k \in \mathbb{N}$, $a_k$ is of class $C^3$ on $(-1/2,1/2)\times (1/8,2)$.

\begin{step1} The function $F_{f,g}$ is of class $C^3$. Moreover, for every multi-index $\beta\neq(0,0)$ with $|\beta|\leqslant 3$, we have
\[
\partial^\beta F_{f,g}(\alpha,\eta)
=\sum_{k=1}^{\infty} \partial^\beta a_k(\alpha,\eta),
\qquad (\alpha,\eta)\in(-1/2,1/2)\times(1/8,2).
\]
\end{step1}
\begin{proof}
For any $(\alpha,\eta)\in (-1/2,1/2)\times (1/8,2)$ and for any $k\in \N$, we have
\[\begin{aligned}\vert a_k(\alpha,\eta)\vert &=M\biggl(\frac{\vert f(k)+\alpha g(k)\vert }{\eta}\biggr)\\&\leqslant M\biggl(\frac{\vert  f(k)\vert +1/2\vert g(k)\vert }{\eta}\biggr)\\&\leqslant M\bigl(8\vert  f(k)\vert +4\vert g(k)\vert \bigr).
\end{aligned}\]
Since $\sum_{k} M\bigl(8\vert  f(k)\vert +4\vert g(k)\vert \bigr)<\infty$, it follows that the series $\sum_{k}a_k(\alpha,\eta)$ converges uniformly on $(-1/2,1/2)\times (1/8,2)$. 

We will use the convention that $\operatorname{sgn}(0)=0$. 

For any $(\alpha,\eta)\in (-1/2,1/2)\times (1/8,2)$ and for any $k\in \N$, we have
\[\frac{\partial a_k}{\partial \alpha}(\alpha,\eta)= \frac{g(k)}{\eta}{\rm sgn}\bigl(f(k)+\alpha g(k)\bigr)M'\biggl(\frac{\vert f(k)+\alpha g(k)\vert }{\eta}\biggr).\]   
Hence, since $M'$ is increasing,  \[\begin{aligned}
\biggl\vert \frac{\partial a_k}{\partial \alpha}(\alpha,\eta)\biggr\vert&\leqslant \frac{\vert g(k)\vert}{\eta} M'\biggl(\frac{\vert f(k)+\alpha g(k)\vert }{\eta}\biggr)\\&\leqslant 8 \vert g(k)\vert M'\bigl(8\vert  f(k)\vert +4\vert g(k)\vert \bigr)\\&\leqslant 8 \vert g(k)\vert M'(\vert g(k)\vert)+8 \bigl(8\vert  f(k)\vert +4\vert g(k)\vert \bigr)M'\bigl(8\vert  f(k)\vert +4\vert g(k)\vert \bigr)\\&\leqslant 8KM(\vert g(k)\vert )+8K M\bigl(8\vert  f(k)\vert +4\vert g(k)\vert \bigr).
\end{aligned}\]
It follows that $\sum_{k}\frac{\partial a_k}{\partial \alpha}(\alpha,\eta)$ converges uniformly on $(-1/2,1/2)\times (1/8,2)$. 

For any $(\alpha,\eta)\in (-1/2,1/2)\times (1/8,2)$ and for any $k\in \N$, we have 
\[\frac{\partial a_k}{\partial \eta}(\alpha,\eta)= -\frac{\vert f(k)+\alpha g(k)\vert }{\eta^2}M'\biggl(\frac{\vert f(k)+\alpha g(k)\vert }{\eta}\biggr).\]

Hence, 
\[\begin{aligned}
\biggl\vert \frac{\partial a_k}{\partial \eta}(\alpha,\eta)\biggl\vert&\leqslant\frac{1}{\eta}\frac{\vert f(k)+\alpha g(k)\vert }{\eta} M'\biggl(\frac{\vert f(k)+\alpha g(k)\vert }{\eta}\biggr)\\&\leqslant 8 \bigl(8\vert  f(k)\vert +4\vert g(k)\vert \bigr)M'\bigl(8\vert  f(k)\vert +4\vert g(k)\vert \bigr)\\&\leqslant8K M\bigl(8\vert  f(k)\vert +4\vert g(k)\vert \bigr).
\end{aligned}\]
It follows that $\sum_{k}\frac{\partial a_k}{\partial \eta}(\alpha,\eta)$ converges uniformly on $(-1/2,1/2)\times (1/8,2)$. 

For any $(\alpha,\eta)\in (-1/2,1/2)\times (1/8,2)$ and for any $k\in \N$, we have 
\[\begin{aligned}\frac{\partial^2 a_k}{\partial \alpha\,\partial \eta}(\alpha, \eta)= -& \frac{g(k)}{\eta^2}{\rm sgn}\bigl(f(k)+\alpha g(k)\bigr)M'\biggl(\frac{\vert f(k)+\alpha g(k)\vert }{\eta}\biggr)\\-&\frac{g(k)}{\eta^2}\frac{f(k)+\alpha g(k)}{\eta}M''\biggl(\frac{\vert f(k)+\alpha g(k)\vert }{\eta}\biggr).\end{aligned}\]
Hence, \[\begin{aligned}\biggl\vert\frac{\partial^2 a_k}{\partial \alpha\,\partial \eta}(\alpha, \eta)\biggr\vert&\leqslant 64 \vert g(k)\vert M'\biggl(\frac{\vert f(k)+\alpha g(k)\vert }{\eta}\biggr)+64 K\vert g(k)\vert M'\biggl(\frac{\vert f(k)+\alpha g(k)\vert }{\eta}\biggr)\\&\leqslant64(K+1)\vert g(k)\vert M'\bigl(8\vert f(k)\vert+4\vert g(k)\bigr)\\&\leqslant 64K(K+1)M(\vert g(k)\vert )+64K(K+1)M\bigl(8\vert  f(k)\vert +4\vert g(k)\vert \bigr).
\end{aligned}\]
It follows that $\sum_{k}\frac{\partial^2 a_k}{\partial \alpha\,\partial \eta}(\alpha, \eta)(=\sum_{k}\frac{\partial^2 a_k}{\partial \eta\,\partial \alpha}(\alpha, \eta)$ because $a_k$ is $C^3$ for every $k$) converges uniformly on $(-1/2,1/2)\times (1/8,2)$. 

For any $(\alpha,\eta)\in (-1/2,1/2)\times (1/8,2)$ and for any $k\in \N$, we have 
\[\frac{\partial^2 a_k}{\partial \alpha^2}(\alpha, \eta)=  \frac{\vert g(k)\vert^2}{\eta^2}M''\biggl(\frac{\vert f(k)+\alpha g(k)\vert }{\eta}\biggr).\]
Hence, since $M''$ is increasing,\[\begin{aligned}\biggl\vert\frac{\partial^2 a_k}{\partial \alpha^2}(\alpha, \eta)\biggr\vert&\leqslant 64 \vert g(k)\vert^2 M''(\vert g(k)\vert )+64 \frac{\vert f(k)+\alpha g(k)\vert^2 }{\eta^2} M''\biggl(\frac{\vert f(k)+\alpha g(k)\vert }{\eta}\biggr)\\&\leqslant64K^2M(\vert g(k)\vert)+64 K^2 M\biggl(\frac{\vert f(k)+\alpha g(k)\vert }{\eta}\biggr)\\&\leqslant 64K^2M(\vert g(k)\vert )+64K^2 M\bigl(8\vert  f(k)\vert +4\vert g(k)\vert \bigr).
\end{aligned}\]
It follows that $\sum_{k}\frac{\partial^2 a_k}{\partial \alpha^2}(\alpha, \eta)$ converges uniformly on $(-1/2,1/2)\times (1/8,2)$. 

For any $(\alpha,\eta)\in (-1/2,1/2)\times (1/8,2)$ and for any $k\in \N$, we have 
\[\begin{aligned}\frac{\partial^2 a_k}{\partial \eta^2}(\alpha, \eta)=  2&\frac{\vert f(k)+\alpha g(k)\vert}{\eta^3}M'\biggl(\frac{\vert f(k)+\alpha g(k)\vert }{\eta}\biggr)\\+&\frac{\vert f(k)+\alpha g(k)\vert^2}{\eta^4}M''\biggl(\frac{\vert f(k)+\alpha g(k)\vert }{\eta}\biggr) \end{aligned}\]
Hence, since $M''$ is increasing,\[\begin{aligned}\biggl\vert\frac{\partial^2 a_k}{\partial \eta^2}(\alpha, \eta)\biggr\vert&\leqslant 128 K  M\biggl(\frac{\vert f(k)+\alpha g(k)\vert }{\eta}\biggr)+64 K^2 M\biggl(\frac{\vert f(k)+\alpha g(k)\vert }{\eta}\biggr) \\&\leqslant (128K+64 K^2) M\bigl(8\vert  f(k)\vert +4\vert g(k)\vert \bigr).
\end{aligned}\]
It follows that $\sum_{k}\frac{\partial^2 a_k}{\partial \eta^2}(\alpha, \eta)$ converges uniformly on $(-1/2,1/2)\times (1/8,2)$.

For any $(\alpha,\eta)\in (-1/2,1/2)\times (1/8,2)$ and for any $k\in \N$, we have 
\[\frac{\partial^3 a_k}{\partial \alpha^3}(\alpha, \eta)= \frac{(g(k))^3}{\eta^3} {\rm sgn}\bigl(f(k)+\alpha g(k)\bigr)M^{(3)}\biggl(\frac{\vert f(k)+\alpha g(k)\vert }{\eta}\biggr). \]
Hence, since $M^{(3)}$ is increasing, 
\[\begin{aligned}
    \biggl\vert \frac{\partial^3 a_k}{\partial \alpha^3}(\alpha, \eta)\biggr\vert &\leqslant512 \vert g(k)\vert^3M^{(3)}\biggl(\frac{\vert f(k)+\alpha g(k)\vert }{\eta}\biggr)\\&\leqslant512 \vert g(k)\vert^3M^{(3)}(\vert g(k)\vert )+512 \frac{\vert f(k)+\alpha g(k)\vert^3 }{\eta^3}M^{(3)}\biggl(\frac{\vert f(k)+\alpha g(k)\vert }{\eta}\biggr)\\&\leqslant 512 K^3 M(\vert g(k)\vert )+ 512 K^3 M\biggl(\frac{\vert f(k)+\alpha g(k)\vert }{\eta}\biggr) \\&\leqslant 512 K^3 M(\vert g(k)\vert )+ 512 K^3 M\bigl(8\vert  f(k)\vert +4\vert g(k)\vert \bigr).
\end{aligned}\]
It follows that $\sum_{k}\frac{\partial^3 a_k}{\partial \alpha^3}(\alpha, \eta)$ converges uniformly on $(-1/2,1/2)\times (1/8,2)$.

For any $(\alpha,\eta)\in (-1/2,1/2)\times (1/8,2)$ and for any $k\in \N$, we have 
\[\begin{aligned}\frac{\partial^3 a_k}{\partial \eta\,\partial \alpha^2}(\alpha, \eta)=  -&2\frac{\vert  g(k)\vert^2}{\eta^3}M''\biggl(\frac{\vert f(k)+\alpha g(k)\vert }{\eta}\biggr)\\-&\frac{\vert  g(k)\vert^2}{\eta^4}\vert f(k)+\alpha g(k)\vert M^{(3)}\biggl(\frac{\vert f(k)+\alpha g(k)\vert }{\eta}\biggr). \end{aligned}\]
Hence, since $M^{(3)}$ is increasing, 
\[\begin{aligned}
    \biggl\vert \frac{\partial^3 a_k}{\partial \eta\,\partial \alpha^2}(\alpha, \eta)\biggr\vert &\leqslant 2 \frac{\vert g(k)\vert^2}{\eta^3}M''\biggl(\frac{\vert f(k)+\alpha g(k)\vert }{\eta}\biggr)+\frac{\vert g(k)\vert^2}{\eta^3}K M''\biggl(\frac{\vert f(k)+\alpha g(k)\vert }{\eta}\biggr)\\&\leqslant 512(K+2) \vert g(k)\vert^2M''\biggl(\frac{\vert f(k)+\alpha g(k)\vert }{\eta}\biggr)\\&\leqslant 512(K+2) \biggl(\vert g(k)\vert^2M''(\vert g(k)\vert )+ \frac{\vert f(k)+\alpha g(k)\vert^2 }{\eta^2}M''\biggl(\frac{\vert f(k)+\alpha g(k)\vert }{\eta}\biggr)\biggr) \\&\leqslant 512 K^2(K+2)  M(\vert g(k)\vert )+ 512 K^2(K+2)M\biggl(\frac{\vert f(k)+\alpha g(k)\vert }{\eta}\biggr)\\&\leqslant 512 K^2(K+2)  M(\vert g(k)\vert )+ 512 K^2(K+2) M\bigl(8\vert  f(k)\vert +4\vert g(k)\vert \bigr).
\end{aligned}\]
It follows that $\sum_{k}\frac{\partial^3 a_k}{\partial \eta\,\partial \alpha^2}(\alpha, \eta)$ converges uniformly on $(-1/2,1/2)\times (1/8,2)$.

For any $(\alpha,\eta)\in (-1/2,1/2)\times (1/8,2)$ and for any $k\in \N$, we have 
\[\begin{aligned}\frac{\partial^3 a_k}{\partial \eta^3}(\alpha, \eta)=  
-&6\,\frac{\vert f(k)+\alpha g(k)\vert }{\eta^4}M'\biggl(\frac{\vert f(k)+\alpha g(k)\vert }{\eta}\biggr)
\\-&6\,\frac{\vert f(k)+\alpha g(k)\vert^2 }{\eta^5}M''\biggl(\frac{\vert f(k)+\alpha g(k)\vert }{\eta}\biggr)\\-&\frac{\vert f(k)+\alpha g(k)\vert^3 }{\eta^6}M^{(3)}\biggl(\frac{\vert f(k)+\alpha g(k)\vert }{\eta}\biggr). \end{aligned}\]
Hence, 
\[\begin{aligned}
    \biggl\vert \frac{\partial^3 a_k}{\partial \eta^3}(\alpha, \eta)\biggr\vert &\leqslant \biggl(\frac{6}{\eta^3}K+\frac{6}{\eta^3}K^2+\frac{1}{\eta^3}K^3\biggr)M\biggl(\frac{\vert f(k)+\alpha g(k)\vert }{\eta}\biggr)\\&\leqslant 6\times 512 K^3  M\bigl(8\vert  f(k)\vert +4\vert g(k)\vert \bigr).
\end{aligned}\]
It follows that $\sum_{k}\frac{\partial^3 a_k}{\partial \eta^3}(\alpha, \eta)$ converges uniformly on $(-1/2,1/2)\times (1/8,2)$.

For any $(\alpha,\eta)\in (-1/2,1/2)\times (1/8,2)$ and for any $k\in \N$, we have 
\[\begin{aligned}\frac{\partial^3 a_k}{\partial \alpha\,\partial \eta ^2}(\alpha, \eta)= &\frac{2}{\eta^3} g(k) {\rm sgn}\bigl(f(k)+\alpha g(k)\bigr)M'\biggl(\frac{\vert f(k)+\alpha g(k)\vert }{\eta}\biggr)\\&+\frac{4}{\eta^4} g(k) \bigl( f(k)+\alpha g(k)\bigr)  M''\biggl(\frac{\vert f(k)+\alpha g(k)\vert }{\eta}\biggr)\\& + \frac{1}{\eta^5} g(k) \vert f(k)+\alpha g(k)\vert ^2  {\rm sgn}\bigl(f(k)+\alpha g(k)\bigr) M^{(3)}\biggl(\frac{\vert f(k)+\alpha g(k)\vert }{\eta}\biggr). \end{aligned}\]
Hence, since $M'$ is increasing, 

\[\begin{aligned}
    \biggl\vert \frac{\partial^3 a_k}{\partial \alpha\,\partial \eta ^2}(\alpha, \eta)\biggr\vert& \leqslant \biggl(\frac{2}{\eta^3}+\frac{4}{\eta^3}K+\frac{1}{\eta^3}K^2\biggl)\vert g(k)\vert M'\biggl(\frac{\vert f(k)+\alpha g(k)\vert }{\eta}\biggr)\\&\leqslant 7\times 512 K^2 \vert g(k)\vert M'\biggl(\frac{\vert f(k)+\alpha g(k)\vert }{\eta}\biggr)\\&\leqslant 3584 K^2 \vert g(k)\vert M'(\vert g(k)\vert )+ 3584 K^2 \frac{\vert f(k)+\alpha g(k)\vert }{\eta} M'\biggl(\frac{\vert f(k)+\alpha g(k)\vert }{\eta}\biggr)\\&\leqslant 3584 K^3 M(\vert g(k)\vert )+3584 K^3 M\bigl(8\vert  f(k)\vert +4\vert g(k)\vert \bigr).
\end{aligned}\]
It follows that $\sum_{k}\frac{\partial^3 a_k}{\partial \alpha\,\partial \eta ^2}(\alpha, \eta)$ converges uniformly on $(-1/2,1/2)\times (1/8,2)$.

\smallskip All the above uniformly convergent series on $(-1/2,1/2)\times(1/8,2)$, together with the fact that each $a_k$ is of class $C^3$, complete the proof of Step~1.
\end{proof}
\begin{step2}
 There exists $C^M_0>1$ such that for any $f,g\in\ell_M$ such that $4/5\leqslant\Vert f\Vert,\Vert g\Vert \leqslant5/4$, we have 
    \[\bigl \vert \partial^\beta F_{f,g}(\alpha,\eta)\bigr\vert \leqslant C_0,
\] for any $(\alpha,\eta)\in (-1/2,1/2)\times (1/8,2)$ and for any multi-index $\beta\neq(0,0)$ with $|\beta|\leqslant 3$. 
\end{step2}
\begin{proof}
Let $f,g\in \ell_M $ be such that $4/5<\Vert f\Vert,\Vert g\Vert <5/4$. 

    From the computation made in the proof of Step~1, we see that for any multi-index $\beta\neq(0,0)$ with $|\beta|\leqslant 3$ and for any $(\alpha,\eta)\in (-1/2,1/2)\times (1/8,2)$, we have 
   \[\bigl\vert  \partial^\beta F_{f,g}(\alpha,\eta)\bigr\vert \leqslant 3584 K^3 \biggl(\sum_{k=1}^\infty M(\vert g(k)\vert )+\sum_{k=1}^\infty M(8\vert f(k)\vert +4 \vert g(k)\vert )\biggr).  \]
   Since, $M$ satisfies the $\Delta_2$-condition at zero, il follows that for every $l>1$, there exists $C(l)>1$ such that \begin{equation}\label{Delta2} M(lx)\leqslant C(l) M(x),\qquad \text{for any }\, x\in [0,1].\end{equation}
 Since $\Vert g\Vert\leqslant5/4$, it follows that 
 \[\sum_{k=1}^\infty M\biggl(\frac{\vert g(k)\vert }{5/4}\biggr)\leqslant 1. \]
 So, 
\[\sum_{k=1}^\infty M(\vert g(k)\vert )=\sum_{k=1}^\infty M\biggl(\frac{5}{4}\frac{\vert g(k)\vert }{5/4}\biggr) \leqslant C(5/4).\]
In the same way, since $\Vert f\Vert, \Vert g\Vert \leqslant5/4$, we have that  $\bigl\Vert 8\vert f\vert +4\vert g\vert \bigr\Vert\leqslant15$, and so 
\[\sum_{k=1}^\infty M(8\vert f(k)\vert +4\vert g(k)\vert )\leqslant C(15).\]
Thus the proof of Step~2 is complete by taking
\[
C_0^M = 3584\,K^3\bigl(C(5/4)+C(15)\bigr),
\]
which depends only on $M$.
    
\end{proof}
Now, we  define $N_{f,g}:(-1/2,1/2)\to \R$ by $N_{f,g}(\alpha)=\Vert f+\alpha g\Vert$. Note that, since $M $ satisfies that $\Delta_2$-condition at $0$, we have 
\[\sum_{k=1}^\infty M\biggr(\frac{\vert f(k)+\alpha g(k)\vert}{N_{f,g}(\alpha)}\biggr)=1, \qquad \text{for any } \, \alpha\in (-1/2,1/2).\]
\begin{step3} The function $N_{f,g}$ is of class $C^3$ on $(-1/2,1/2)$.
\end{step3}
\begin{proof}
    It is easy to see that, since $4/5\leqslant\Vert f \Vert, \Vert g\Vert \leqslant5/4$, we have 
    \[
1/8<N_{f,g}(\alpha)<2,
\qquad \text{for any } \,\alpha\in(-1/2,1/2).
\]
Note that for any $t\geqslant0$, we have $tM'(t)\geqslant M(t)$, since 
\[M(t)=\int_{0}^t M'(s)ds \leqslant \int_{0}^t M'(t)ds= tM'(t), \]
because $M(0)=0$ and $M'$ is increasing.

So, for any $\alpha\in(-1/2,1/2)$, we have 
\begin{equation}\label{second}
    \begin{aligned}
   \frac{\partial F_{f,g}}{\partial \eta}(\alpha,N_{f,g}(\alpha))&= -\sum_{k=1}^\infty\frac{\vert f(k)+\alpha g(k)\vert }{\bigl(N_{f,g}(\alpha)\bigr)^2}M'\biggl(\frac{\vert f(k)+\alpha g(k)\vert }{N_{f,g}(\alpha)}\biggr) \\&\leqslant -\frac{1}{N_{f,g}(\alpha)}\sum_{k=1}^\infty M\biggl(\frac{\vert f(k)+\alpha g(k)\vert }{N_{f,g}(\alpha)}\biggr)\\&=-\frac{1}{N_{f,g}(\alpha)}< -\frac{1}{2}<0. 
\end{aligned}\end{equation}
Since $F_{f,g}$ is of class $C^3$ on $(-1/2,1/2)\times (1,8)$, $F_{f,g}(\alpha,N_{f,g}(\alpha))=0$ and  $\frac{\partial F_{f,g}}{\partial \eta}(\alpha,N_{f,g}(\alpha))\neq 0$ for any $\alpha\in (-1/2,1/2)$,  it follows by the implicit function theorem, that  $N_{f,g}$ is of class $C^3$ on  $(-1/2,1/2)$.\end{proof}

\begin{step4}
    There exists $C_M>1$ such that for any $f,g\in\ell_M$ with $4/5\leqslant\Vert f\Vert,\Vert g\Vert \leqslant5/4$, we have for any $\alpha\in (-1/2,1/2)$, 
    
      \[\bigl\vert N_{f,g}(\alpha)-N_{f,g}(0)-\alpha N'_{f,g}(0)-\frac{\alpha^2}{2}N''_{f,g}(0)\bigr\vert \leqslant C\vert\alpha\vert^3. \]
    \end{step4} 
    \begin{proof}

    Let $f,g\in\ell_M$ be such that $4/5\leqslant \Vert f\Vert,\Vert g\Vert \leqslant5/4$. 

    Recall that, by Step 2 and the proof of Step 3, for any $\alpha\in (-1/2,1/2)$, we have 
    \[\bigl\vert \partial_\eta F_{f,g}\bigl(\alpha,N_{f,g}(\alpha)\bigr)\bigl\vert =-\partial_\eta F_{f,g}\bigl(\alpha,N_{f,g}(\alpha)\bigr)>\frac{1}{2},\]
    and that 
      \[\bigl \vert \partial^\beta F_{f,g}\bigl(\alpha,N_{f,g}(\alpha)\bigr)\bigr\vert \leqslant C_0,
\]  for any multi-index $\beta\neq(0,0)$ with $|\beta|\leqslant 3$. 

    For any $\alpha\in (-1/2,1/2)$, we have 
\begin{equation}\label{formula0}
       F_{f,g}\bigl((\alpha,N_{f,g}(\alpha)\bigr)=0. 
\end{equation}
By differentiating \eqref{formula0} with respect to $\alpha$, we obtain 
\begin{equation}\label{formula1}
\partial_\alpha F_{f,g}\bigl(\alpha,N_{f,g}(\alpha)\bigr)
+N'_{f,g}(\alpha)\,\partial_\eta F_{f,g}\bigl(\alpha,N_{f,g}(\alpha)\bigr)=0.
\end{equation}
Hence, 
\begin{equation}\label{formula1'}\bigl\vert N'_{f,g}(\alpha)\bigr\vert \leqslant 2C_0:=C_1 . \end{equation}
Differentiating \eqref{formula1} with respect to \(\alpha\) gives 
\[
\begin{aligned}
0 &= \partial_{\alpha\alpha} F_{f,g}(\alpha,N_{f,g}(\alpha))
     + N_{f,g}'(\alpha)\,\partial_{\alpha\eta} F_{f,g}(\alpha,N_{f,g}(\alpha))\\
  &\qquad + N_{f,g}''(\alpha)\,\partial_\eta F_{f,g}(\alpha,N_{f,g}(\alpha))
     + N_{f,g}'(\alpha)\bigl(\partial_{\alpha\eta} F_{f,g}(\alpha,N_{f,g}(\alpha))
     + N_{f,g}'(\alpha)\,\partial_{\eta\eta} F_{f,g}(\alpha,N_{f,g}(\alpha))\bigr)\end{aligned}\]
     So, \begin{equation}\label{formula2}
\begin{aligned}
0 &= \partial_{\alpha\alpha} F_{f,g}(\alpha,N_{f,g}(\alpha)) \\
  &\quad + 2N_{f,g}'(\alpha)\,\partial_{\alpha\eta} F_{f,g}(\alpha,N_{f,g}(\alpha)) \\
  &\quad + \bigl(N_{f,g}'(\alpha)\bigr)^2\,\partial_{\eta\eta} F_{f,g}(\alpha,N_{f,g}(\alpha)) \\
  &\quad + N_{f,g}''(\alpha)\,\partial_\eta F_{f,g}(\alpha,N_{f,g}(\alpha)).
\end{aligned}
\end{equation}
Hence, 
\[\vert N_{f,g}''(\alpha)\vert \leqslant 2C_0\bigl(1+2C_1+C_1^2\bigr):=C_2.\]
Differentiating \eqref{formula2} with respect to \(\alpha\) gives
\[
\begin{aligned}
0 &
= \partial_{\alpha\alpha\alpha}F_{f,g}\bigl(\alpha,N_{f,g}(\alpha)\bigr)
  + N_{f,g}'(\alpha)\,\partial_{\alpha\alpha\eta}F_{f,g}\bigl(\alpha,N_{f,g}(\alpha)\bigr)\\
&\quad + 2N_{f,g}''(\alpha)\,\partial_{\alpha\eta}F_{f,g}\bigl(\alpha,N_{f,g}(\alpha)\bigr)
  \\ &\quad +2N_{f,g}'(\alpha)\biggl(\partial_{\alpha\alpha\eta}F_{f,g}\bigl(\alpha,N_{f,g}(\alpha)\bigr)+N_{f,g}'(\alpha)\,\partial_{\alpha\eta\eta}F_{f,g}\bigl(\alpha,N_{f,g}(\alpha)\bigr)\biggr)\\
&\quad + 2N_{f,g}'(\alpha)N_{f,g}''(\alpha)\,\partial_{\eta\eta}F_{f,g}\bigl(\alpha,N_{f,g}(\alpha)\bigr)
\\ &\quad + (N_{f,g}'(\alpha))^2\biggl(\partial_{\alpha\eta\eta}F_{f,g}\bigl(\alpha,N_{f,g}(\alpha)\bigr)+N_{f,g}'(\alpha)\,\partial_{\eta\eta\eta}F_{f,g}\bigl(\alpha,N_{f,g}(\alpha)\bigr)\biggr)\\
&\quad + N_{f,g}'''(\alpha)\,\partial_{\eta}F_{f,g}\bigl(\alpha,N_{f,g}(\alpha)\bigr)
\\ &\quad + N_{f,g}''(\alpha)\biggl(\partial_{\alpha\eta}F_{f,g}\bigl(\alpha,N_{f,g}(\alpha)\bigr)+N_{f,g}'(\alpha)\,\partial_{\eta\eta}F_{f,g}\bigl(\alpha,N_{f,g}(\alpha)\bigr)\biggr)
\end{aligned}
\]
So, 
\[\vert N_{f,g}'''(\alpha)\vert \leqslant2C_0\bigl(1+4C_1+3C_2+3C_1^2+2C_1C_2+C_1^3\bigr):= C_3. \]
Therefore,  by Taylor's theorem (Lagrange form of the remainder), for all $\alpha\in(-1/2,1/2)$, we have 
 \[\bigl\vert N_{f,g}(\alpha)-N_{f,g}(0)-\alpha N'_{f,g}(0)-\frac{\alpha^2}{2}N''_{f,g}(0)\bigr\vert \leqslant \frac{C_3}{6}\vert\alpha\vert^3. \]
  \end{proof}
    \begin{step5} For any $\varepsilon >0$, there exists $0<h_M(\epsilon)<1$  such that the following holds: For any $f,g\in \ell_M$  with $4/5\leqslant \Vert f\Vert ,\Vert g\Vert \leqslant 5/4$, if there  do not exist $\tild{f}, \tild{g}\in \ell_M$ satisfying 
\begin{equation}\tag{$\ast$}\label{star}
\left\{
\begin{aligned}
&\operatorname{supp}(\widetilde f)\cap \operatorname{supp}(\widetilde g)=\emptyset,\\
&\|f-\widetilde f\|\leqslant\varepsilon,\\
&\|g-\widetilde g\|\leqslant\varepsilon,
\end{aligned}
\right.
\end{equation}
then \[\frac{\partial^2 F_{f,g}}{\partial\alpha^2}\bigl(0,N_{f,g}(0)\bigr)\geqslant h_M(\varepsilon). \]
     \end{step5}
     \begin{proof}
         We have already seen in the proof of Step 3 that for any $t\geqslant 0$, we have $tM'(t)\geqslant M(t)$. In the same way, we can show that $tM''(t)\geqslant M'(t)$ (because $M'(0)=0$ and $M''$ is increasing). So, for any $t\geqslant 0$, 
         \begin{equation}\label{rk}
             t^2M''(t)\geqslant M(t). 
         \end{equation}
     
         Let $\varepsilon >0$ and let $f,g\in \ell_M$  with $4/5\leqslant \Vert f\Vert ,\Vert g\Vert \leqslant5/4$ be such that there do not exist $\widetilde f,\widetilde g\in\ell_M$ satisfying \eqref{star}. 
         
         Let \[A:=\{k\in \N; \; \vert f(k)\vert \geqslant \vert g(k)\vert\} \qquad \text{and}\qquad  B:=\{k\in \N; \; \vert f(k)\vert <\vert g(k)\vert \}.\] 

         We set \[\widetilde{f}:=f \ind_A \qquad \text{and}\qquad \widetilde{g}:=g \ind_B .\]

         Since $\operatorname{supp}(\widetilde f)\cap \operatorname{supp}(\widetilde g)=\emptyset$, it follows that 
         \[\bigr\Vert f-\widetilde{f}\bigr\Vert>\varepsilon \qquad\text{or} \qquad \bigr\Vert g-\widetilde{g}\bigr\Vert>\varepsilon.   \]
         \smallskip  \textbf{Case 1:} $\bigr\Vert g-\widetilde{g}\bigr\Vert>\varepsilon$. 

         By \eqref{rk}, \eqref{Delta2}, and since $M''$ is increasing, we have 
         \[\begin{aligned}
            \frac{\partial^2 F_{f,g}}{\partial\alpha^2}\bigl(0,N_{f,g}(0)\bigr)&= \sum_{k=1}^\infty \frac{\vert g(k)\vert^2}{(N_{f,g}(0)) ^2}M''\biggl(\frac{\vert f(k)\vert }{N_{f,g}(0)}\biggr)\\&=\sum_{k=1}^\infty \frac{\vert g(k)\vert^2}{\Vert f\Vert ^2}M''\biggl(\frac{\vert f(k)\vert }{\Vert f\Vert }\biggr)\\&\geqslant \sum_{k=1}^\infty \frac{\vert g(k)\vert^2}{\Vert f\Vert ^2}M''\biggl(\frac{\vert f(k)\vert }{\Vert f\Vert }\biggr) \ind_A\\& \geqslant \sum_{k=1}^\infty \frac{\vert g(k)\vert^2}{\Vert f\Vert ^2}M''\biggl(\frac{\vert g(k)\vert }{\Vert f\Vert }\biggr) \ind_A \\&=\sum_{k=1}^\infty  \frac{\vert (g-\widetilde g)(k)\vert ^2}{\Vert f\Vert ^2} M''\biggl(\frac{\vert (g-\widetilde g)(k)\vert }{\Vert f\Vert }\biggr)\\&\geqslant \sum_{k=1}^\infty  M\biggl(\frac{\vert (g-\widetilde g)(k)\vert }{\Vert f\Vert }\biggr) \\&= \sum_{k=1}^\infty  M\biggl(\frac{\vert (g-\widetilde g)(k)\vert }{\Vert g-\widetilde g\Vert }\frac{\Vert g-\widetilde g\Vert }{\Vert f\Vert }\biggr)\\&\geqslant \sum_{k=1}^\infty  M\biggl(\frac{4\epsilon}{5}\frac{\vert (g-\widetilde g)(k)\vert }{\Vert g-\widetilde g\Vert }\biggr)\\&\geqslant\frac{1}{C(\frac{5}{4\epsilon})}\sum_{k=1}^\infty  M\biggl(\frac{\vert (g-\widetilde g)(k)\vert }{\Vert g-\widetilde g\Vert }\biggr)\\&= \frac{1}{C(\frac{5}{4\epsilon})}:=h_M(\varepsilon).
            \end{aligned}\]
            \smallskip  \textbf{Case 2:} $\bigr\Vert f-\widetilde{f}\bigr\Vert>\varepsilon$. 
            
                In the same way, we show that $\frac{\partial^2 F_{f,g}}{\partial\alpha^2}\bigl(0,N_{f,g}(0)\bigr)\geqslant h_M(\varepsilon)$. 
     \end{proof}

The following last step concludes the proof of Theorem \ref{thm:MainDisjointnessPreservation}.
     
   \begin{step6} Let $M_1$ and $M_2$ be good Orlicz functions.
   For any $\varepsilon >0$, there exists $\delta>0$ such that  the following holds:
   For every disjoint $u, v \in \ell_{M_1}$ with $\|u\|_{M_1} = \|v\|_{M_1} = 1$, and every $f,g\in \ell_{M_2}$  with $\frac{1}{1+\delta}\leqslant \Vert f\Vert_{M_2} ,\Vert g\Vert_{M_2} \leqslant 1+\delta$, if there  do not exist $\tild{f}, \tild{g}\in \ell_{M_2}$ satisfying 
\begin{equation}\tag{$\ast$}\label{star2}
\left\{
\begin{aligned}
&\operatorname{supp}(\widetilde f)\cap \operatorname{supp}(\widetilde g)=\emptyset,\\
&\|f-\widetilde f\|_{M_2}\leqslant\varepsilon,\\
&\|g-\widetilde g\|_{M_2}\leqslant\varepsilon,
\end{aligned}
\right.
\end{equation}
then there exists $\alpha\in \R$ such that 
\[\Vert f+\alpha g \Vert_{M_2} > (1+\delta) \Vert u+\alpha v\Vert_{M_1}. \]
   \end{step6}
   \begin{proof}
Let 
\[C:=\max\{C_{M_1}, C_{M_2}\},\]
where $C_{M_1}$ and  $C_{M_2}$ are given by Step 4. So, for every $\alpha\in (-1/2,1/2)$, we have 
\[N_{u,v}(\alpha)-1-\alpha N'_{u,v}(0)-\alpha^2/2 N''_{u,v}(0)\leqslant C \vert \alpha\vert ^3. \]
Since $N_{u,v}$ has a minimum at $0$ (because $u$ and $v$ are disjointly supported), it follows that $N'_{u,v}(0)=0$. 

Moreover, since 
\[\frac{\partial^2F_{u,v}}{\partial\alpha^2}(0,N_{u,v}(0))=\sum_{k=1}^\infty \vert v(k)\vert ^2M'' (\vert u(k)\vert )=0,\]
and 
\[\frac{\partial F_{u,v}}{\partial\eta}(0,N_{u,v}(0))\neq 0\]
(by \eqref{second}), it follows from \eqref{formula2} that \[N''_{u,v}(0)=0.\]
Therefore, for any $\alpha\in(-1/2,1/2)$, we have 
\begin{equation}\label{T1}
N_{u,v}(\alpha)\leqslant 1+C\vert \alpha\vert^3. 
\end{equation}
Fix $\varepsilon>0$. Define
\[
h_1(\varepsilon)=\frac{h_{M_2}(\varepsilon)}{6C_0^{M_2}},
\]
where $C_0^{M_2}$ and $h_{M_2}(\varepsilon)$ are given in Steps~2 and~5, respectively.

Set
\[
\alpha_0=\frac{h_1(\varepsilon)}{8C}.
\] Note that $0<\alpha_0<\tfrac18$.
Finally put
\[
\delta_1=\frac{1}{1-\dfrac{\alpha_0^2 h_1(\varepsilon)}{4}}-1,\qquad
\delta_2=\frac{1+\dfrac{\alpha_0^2 h_1(\varepsilon)}{4}}{1+\dfrac{\alpha_0^2 h_1(\varepsilon)}{8}}-1,
\]
and
\[
\delta=\min\Bigl\{\tfrac14,\ \delta_1,\ \delta_2\Bigr\}.
\]
Let $f,g \in \ell_{M_2}$  with $\frac{1}{1+\delta}\leqslant \Vert f\Vert ,\Vert g\Vert \leqslant 1+\delta$ be such that there  do not exist $\tild{f}, \tild{g}\in \ell_{M_2}$ satisfying \eqref{star2}. 

Since $0<\delta\leqslant 1/4$, it follows that $4/5  \leqslant \Vert f\Vert ,\Vert g\Vert \leqslant 5/4$. Note  that  $N''_{f,g}(0)\geqslant 0$ since $N_{f,g}$ is convex. 

We claim that
\begin{equation}\label{fact}
\vert N'_{f,g}(0)\vert\leqslant h_1(\varepsilon)\quad\Longrightarrow\quad
N''_{f,g}(0)>2h_1(\varepsilon).
\end{equation}
Indeed, if $\vert N'_{f,g}(0)\vert\leqslant h_1(\varepsilon)$ and $N''_{f,g}(0)\leqslant 2h_1(\varepsilon)$, then \eqref{formula2} together with Step 2 imply that 
\[ \begin{aligned} \frac{\partial^2F_{f,g}}{\partial \alpha^2}(0,N_{f,g}(0))&\leqslant 2C_0^{M_2}h_1(\varepsilon)+C_0^{M_2}(h_1(\varepsilon))^2+2C_0^{M_2}h_1(\varepsilon)\\&<4C_0^{M_2}h_1(\varepsilon)+C_0^{M_2}h_1(\varepsilon)=\frac{5}{6}h_{M_2}(\varepsilon)<h_{M_2}(\varepsilon),\end{aligned}\]
which  contradicts Step~5, and hence proves \eqref{fact}.

\medskip
Our aim now is to show that 
\[N_{f,g}(\alpha_0)>(1+\delta)N_{u,v}(\alpha_0) \qquad\text{or}\qquad N_{f,g}(-\alpha_0)>(1+\delta)N_{u,v}(-\alpha_0),\]
which will complete the proof of Step~6.

\smallskip We distinguish two cases.

 \smallskip  \textbf{Case 1:} $N'_{f,g}(0)\geqslant 0$. 

 By \eqref{fact}, there are two subcases: either $N'_{f,g}(0)>h_1(\varepsilon)$ or $N''_{f,g}(0)>2h_1(\varepsilon)$.
 
 - If $N'_{f,g}(0)>h_1(\varepsilon)$, then by Step 4, we have for every $0< \alpha<1/2$, \[\begin{aligned}
     N_{f,g}(\alpha)&\geqslant N_{f,g}(0)+\alpha N'_{f,g}(0)+\frac{\alpha^2}{2} N''_{f,g}(0)-C\alpha^3\\&\geqslant \Vert f\Vert +\alpha N'_{f,g}(0)-C\alpha^3\\&>\Vert f\Vert +\alpha h_1(\varepsilon)-C\alpha^3 \\&\geqslant  \Vert f\Vert +\alpha^2 h_1(\varepsilon)-C\alpha^3 \\&=\Vert f \Vert + \alpha^2\bigl(h_1(\varepsilon)-C\alpha\bigr). 
 \end{aligned}\]
 
 - If $N''_{f,g}(0)>2h_1(\varepsilon)$, then for every $0< \alpha<1/2$, \[\begin{aligned}
     N_{f,g}(\alpha)&\geqslant N_{f,g}(0)+\alpha N'_{f,g}(0)+\frac{\alpha^2}{2} N''_{f,g}(0)-C\alpha^3\\&\geqslant \Vert f\Vert +\frac{\alpha^2}{2} N''_{f,g}(0)-C\alpha^3\\&>\Vert f\Vert +\alpha^2 h_1(\varepsilon)-C\alpha^3  \\&=\Vert f \Vert + \alpha^2\bigl(h_1(\varepsilon)-C\alpha\bigr). 
 \end{aligned}\]

Hence, in both subcases, 
\[\begin{aligned}
   N_{f,g}(\alpha_0)&> \Vert f \Vert + \alpha_0^2\bigl(h_1(\varepsilon)-C\alpha_0\bigr)\\&=\Vert f\Vert +\frac{7}{8}\alpha_0^2 h_1(\varepsilon)\\&>\Vert f\Vert +\frac{1}{2}\alpha_0^2h_1(\varepsilon).
\end{aligned}\]
Therefore, by \eqref{T1},
\[\begin{aligned}\frac{N_{f,g}(\alpha_0)}{N_{u,v}(\alpha_0)}&>\frac{\Vert f\Vert +\frac{1}{2}\alpha_0^2h_1(\varepsilon)}{1+\frac{1}{8}\alpha_0^2h_1(\varepsilon)}\\& \geqslant\frac{\frac{1}{1+\delta} +\frac{1}{2}\alpha_0^2h_1(\varepsilon)}{1+\frac{1}{8}\alpha_0^2h_1(\varepsilon)}\\& \geqslant\frac{\frac{1}{1+\delta_1} +\frac{1}{2}\alpha_0^2h_1(\varepsilon)}{1+\frac{1}{8}\alpha_0^2h_1(\varepsilon)}\\&=\frac{1-\frac{1}{4}\alpha_0^2h_1(\varepsilon) +\frac{1}{2}\alpha_0^2h_1(\varepsilon)}{1+\frac{1}{8}\alpha_0^2h_1(\varepsilon)}\\&=\frac{1+\frac{1}{4}\alpha_0^2h_1(\varepsilon)}{1+\frac{1}{8}\alpha_0^2h_1(\varepsilon)}\\&=1+\delta_2\geqslant 1+\delta,
\end{aligned}\] which is the desired result. 

 \smallskip  \textbf{Case 2:} $N'_{f,g}(0)< 0$. 

 By \eqref{fact}, there are two subcases: either $N'_{f,g}(0)<-h_1(\varepsilon)$ or $N''_{f,g}(0)>2h_1(\varepsilon)$.
 
 - If $N'_{f,g}(0)<-h_1(\varepsilon)$, then by Step 4, we have for every $-1/2< \alpha<0$, \[\begin{aligned}
     N_{f,g}(\alpha)&\geqslant \Vert f\Vert +\alpha N'_{f,g}(0)+\frac{\alpha^2}{2} N''_{f,g}(0)-C\vert\alpha\vert^3\\&\geqslant \Vert f\Vert +\alpha N'_{f,g}(0)-C\vert\alpha\vert^3\\&>\Vert f\Vert -\alpha h_1(\varepsilon)-C\vert\alpha\vert^3 \\&>  \Vert f\Vert +\alpha^2 h_1(\varepsilon)-C\vert\alpha\vert^3  \\&=\Vert f \Vert + \alpha^2\bigl(h_1(\varepsilon)-C\vert\alpha\vert\bigr). 
 \end{aligned}\]

- If $N''_{f,g}(0)>2h_1(\varepsilon)$, then for every $-1/2< \alpha<0$, we have 
\[\begin{aligned}N_{f,g}(\alpha)&\geqslant \Vert f\Vert +\alpha N'_{f,g}(0)+\frac{\alpha^2}{2} N''_{f,g}(0)-C\vert\alpha\vert^3\\&\geqslant\Vert f\Vert +\frac{\alpha^2}{2} N''_{f,g}(0)-C\vert\alpha\vert^3 \\&> \Vert f\Vert +\alpha^2h_1(\varepsilon)-C\vert\alpha\vert^3 \\&=\Vert f\Vert +\alpha^2\bigl(h_1(\varepsilon)-C\vert \alpha\vert).\end{aligned} \]

Hence, in both subcases, 
\[\begin{aligned}
   N_{f,g}(-\alpha_0)&> \Vert f \Vert + \alpha_0^2\bigl(h_1(\varepsilon)-C\alpha_0\bigr)\\&>\Vert f\Vert +\frac{1}{2}\alpha_0^2h_1(\varepsilon).
\end{aligned}\]
Moreover, by \eqref{T1}, 
\[N_{u,v}(-\alpha_0)\leqslant 1+\frac{1}{8}\alpha_0^2h_1(\varepsilon). \]
So, we can show, by the same computation as in Case~1, that \[\frac{N_{f,g}(-\alpha_0)}{N_{u,v}(-\alpha_0)}>1+\delta.\]
   \end{proof}

\bigskip

   \section{Basis preservation results}\label{sec:3}

   The goal of this section is to prove Theorem \ref{thm:MainBasisPreservation}. To this end, we fix an Orlicz function $M$ satisfying its assumptions. Observe that for every $0 \leqslant t, u \leqslant 1$, it holds in particular that $M(tu) \geqslant M(t)M(u)$. Also observe that the condition $M(1) = 1$ implies that all basic vectors have norm $1$.

\begin{fact}\label{fact1}
    Let $\varepsilon>0$. There exists $h(\varepsilon)>0$ such that for any $x\in \ell_M$ such that $\Vert x\Vert \leqslant1$ and any $i_0 \in \N$, 
    \[\vert x(i_0)\vert >1-h(\varepsilon)\implies \Vert x-\operatorname{sgn}(x(i_0))e_{i_0}\Vert <\varepsilon.\]
\end{fact}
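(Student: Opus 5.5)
Since $\|x\| \leqslant 1$ means $\sum_k M(|x(k)|) \leqslant 1$, a large coordinate $|x(i_0)|$ leaves very little room for the others. We will show that the tail $y := x - x(i_0)e_{i_0}$ has small norm. The remaining error $(|x(i_0)| - 1)\operatorname{sgn}(x(i_0))e_{i_0}$ has norm $\bigl|1 - |x(i_0)|\bigr| \leqslant h(\varepsilon)$, because $\|e_{i_0}\| = 1$ (using $M(1) = 1$). Note also that $|x(i_0)| \leqslant 1$: otherwise $M(|x(i_0)|) > 1$, since $M$ is strictly increasing on $[0, +\infty)$ ($M' > 0$ on $(0, +\infty)$). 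By the triangle inequality it then suffices to choose $h(\varepsilon) \leqslant \varepsilon/2$ such that $\|y\| < \varepsilon/2$.

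To bound the tail, first use $\sum_k M(|x(k)|) \leqslant 1$ to get
\[
\sum_{k \neq i_0} M(|x(k)|) \leqslant 1 - M(|x(i_0)|) < 1 - M(1 - h(\varepsilon)).
\]
Set $\eta(h) := 1 - M(1-h)$. By continuity of $M$ and $M(1) = 1$, $\eta(h) \to 0$ as $h \to 0$. Next we convert the smallness of the modular $\sum_k M(|y(k)|)$ into smallness of the Luxemburg norm $\|y\|$. We use the $\Delta_2$-condition at $0$ in the form of inequality \eqref{Delta2} from Step 2. All coordinates of $y$ are at most $1$ in absolute value, and with $l = 2/\varepsilon$ (assuming $\varepsilon < 2$; the case $\varepsilon \geqslant 2$ is trivial) we get
\[
\sum_{k\neq i_0} M\Bigl(\frac{2|y(k)|}{\varepsilon}\Bigr) \leqslant C(2/\varepsilon)\sum_{k \neq i_0} M(|y(k)|) \leqslant C(2/\varepsilon)\,\eta(h).
\]
Strictly speaking, \eqref{Delta2} requires the argument in $[0,1]$; here $x \in [0,1]$ is $|y(k)|$, so it applies, the scaled point $l x$ being allowed to exceed $1$. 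We then choose $h(\varepsilon)$ small enough that $C(2/\varepsilon)\,\eta(h(\varepsilon)) \leqslant 1$. By definition of the Luxemburg norm this gives $\|y\| \leqslant \varepsilon/2$. To make the final inequality strict, we can instead use $l = 3/\varepsilon$, which gives $\|y\| \leqslant \varepsilon/3 < \varepsilon/2$.

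The only delicate point is this modular-to-norm conversion, which needs a uniform constant. This is exactly what \eqref{Delta2} provides, and it holds because $M$ satisfies the $\Delta_{2++}$- and hence the $\Delta_2$-condition at $0$. Finally we set $h(\varepsilon) = \min\{\varepsilon/2, h_0\}$, where $h_0$ satisfies $C(3/\varepsilon)\,(1 - M(1 - h_0)) \leqslant 1$. The resulting constant depends only on $\varepsilon$ and $M$, uniformly in $x$ and $i_0$, as the statement requires.
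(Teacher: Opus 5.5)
Your proof is correct and follows essentially the same route as the paper: you bound the modular off $i_0$ by $1 - M(|x(i_0)|)$, then turn small modular into small norm via the $\Delta_2$ inequality \eqref{Delta2}, applied to coordinates lying in $[0,1]$. The only difference is cosmetic: you split off the error at coordinate $i_0$ with the triangle inequality (using $\|e_{i_0}\|=1$), whereas the paper keeps that term inside the modular, estimating $\sum_k M(|(x-\theta e_{i_0})(k)|) \leqslant M(1-|x(i_0)|) + 1 - M(|x(i_0)|)$ with $\theta=\operatorname{sgn}(x(i_0))$, via the auxiliary function $t \mapsto M(t)+1-M(1-t)$.
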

\begin{proof}
    Let $\varepsilon>0$ and $C(1/\varepsilon)>1$  be such that \[ M\biggl(\frac{1}{\varepsilon}x\biggl)\leqslant C(1/\varepsilon) M(x),\qquad \text{for any }\, x\in [0,1].\]
   Note that the function $f:t \longmapsto M(t)+1-M(1-t)$ is continuous and strictly increasing on $[0, 1]$. Moreover, $\lim_{t\to 0}f(t)=0$. So one can find $h(\varepsilon)>0$ such that for any $0\leqslant t< h(\varepsilon)$, we have \[f(t)<\frac{1}{C(1/\varepsilon)}.\] 
   Now, let $i_0\in\N$ and $x\in \ell_M$ be such that $\Vert x\Vert \leqslant1$ and $\vert x(i_0)\vert >1-h(\varepsilon)$. Let $\theta=\operatorname{sgn}(x(i_0))$. 

   We have  \[\begin{aligned}
       \sum_{k=1}^\infty M(\vert (x-\theta e_{i_0})(k)\vert)&= M(\vert x(i_0)-\theta\vert)+ \sum_{\substack{k= 1 \\ k\neq i_0}}^\infty  M(\vert x(k)\vert)\\&\leqslant M(1-\vert x(i_0)\vert )+1-M(\vert x(i_0)\vert )\\&= f(1-\vert x(i_0)\vert)<\frac{1}{C(1/\varepsilon)},
   \end{aligned}\]
   since  $1-\vert x(i_0)\vert <h(\varepsilon)$. 

   Hence, \[\sum_{k=1}^\infty M\biggl(\frac{\vert (x-\theta e_{i_0})(k)\vert }{\varepsilon}\biggr)\leqslant C(1/\varepsilon) \sum_{k=1}^\infty M(\vert (x-\theta e_{i_0})(k)\vert)<1,
   \]
   which implies that \[\Vert x-\theta e_{i_0}\Vert <\varepsilon.\]
\end{proof}
\begin{lemma}\label{lemma}
    For any $\varepsilon>0$, there exists $\delta=\delta(\varepsilon)>0$ such that  such that for every  $T\in \operatorname{Emb}_{1+\delta}(\ell_M^2,\ell_M)$, there exist $ \widetilde f,  \widetilde g\in S_{\ell_M}$, such that
    \begin{equation}\tag{$\ast\ast$}\label{2stars}
\left\{
\begin{aligned}
&\operatorname{supp}(\widetilde f)\cap\operatorname{supp}(\widetilde g)=\emptyset,\\
&\Vert T(e_1)- \widetilde f\Vert \leqslant \varepsilon,\\
&\Vert T(e_2)- \widetilde g\Vert \leqslant \varepsilon,\\
&\text{either }\widetilde f=\pm e_{i_0} \text{ or } \widetilde g=\pm e_{i_0}, \text{ for some } i_0\in \N. 
\end{aligned}
\right.
\end{equation}
\end{lemma}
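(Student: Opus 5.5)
We take $\delta$ small and combine Theorem \ref{thm:MainDisjointnessPreservation} with the supermultiplicativity hypothesis $M(tu)\geqslant \alpha(\eta)M(t)M(u)$, applied to the test vector $e_1+e_2$. Throughout, let $c:=\|e_1+e_2\|_{\ell_M^2}$. Since $M(1)=1$, it is determined by $2M(1/c)=1$, so $1/c<1$. Fix $\eta_0>0$ with $1/c<1-\eta_0$, keeping some room so that $1/\rho<1-\eta_0$ for all $\rho$ slightly below $c$.

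\textbf{Step A (reduction to disjoint unit vectors).} Given $\varepsilon'>0$ (to be chosen), Theorem \ref{thm:MainDisjointnessPreservation} with $X=\ell_M^2$ gives $\delta_0$ such that for $T\in\Emb_{1+\delta_0}(\ell_M^2,\ell_M)$ there are disjoint $f',g'$ with $\|T(e_i)-\cdot\|\leqslant\varepsilon'$. Since $\|T(e_i)\|\in[(1+\delta)^{-1},1+\delta]$, we may normalize $f',g'$ to the unit sphere at the cost of $O(\varepsilon'+\delta)$. We may also assume $|f'+g'|$ has norm within $O(\varepsilon'+\delta)$ of $\|T(e_1+e_2)\|$. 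The latter lies in $[c/(1+\delta),(1+\delta)c]$, so $\rho:=\|f'+g'\|\leqslant c(1+\kappa)$ with $\kappa\to0$ as $\varepsilon',\delta\to0$.

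\textbf{Step B (the key rigidity inequality, the main obstacle).} We show that if $f,g\in S_{\ell_M}$ are disjoint and all their coordinates satisfy $|f(k)|,|g(k)|\leqslant 1-\eta$, then $\|f+g\|\geqslant c(1+\kappa_0(\eta))$ for some explicit $\kappa_0(\eta)>0$. Writing $\rho=\|f+g\|$ (at least near $c$, so $1/\rho\leqslant 1-\eta_0$), we have
\[1=\sum_k M\bigl(|f(k)|/\rho\bigr)+\sum_k M\bigl(|g(k)|/\rho\bigr)\geqslant \alpha(\min(\eta,\eta_0))\,M(1/\rho)\Bigl(\sum_k M(|f(k)|)+\sum_k M(|g(k)|)\Bigr)=2\alpha\, M(1/\rho).\]
This uses $\sum_k M(|f(k)|)=1$ for norm-one vectors, by the $\Delta_2$-condition. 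Hence $M(1/\rho)\leqslant 1/(2\alpha)<M(1/c)$, and strict monotonicity and continuity of $M$ give a quantitative gap $\rho\geqslant c(1+\kappa_0)$. Care is needed to make the gap uniform, and to make the argument consistent when $\rho$ is a priori unknown; one argues by contradiction assuming $\rho\leqslant c(1+\kappa)$. The same computation with only the weak inequality $M(tu)\geqslant M(t)M(u)$ shows $\rho\geqslant c$ always.

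\textbf{Step C (conclusion).} Choose $\eta:=h(\varepsilon/3)$ from Fact \ref{fact1}. Then choose $\varepsilon',\delta$ so small that $\kappa<\kappa_0(\eta)$ and all perturbation errors are below $\varepsilon/3$. By Step B, some coordinate of $f'$ or $g'$, say $f'(i_0)$, has modulus $>1-\eta$. Fact \ref{fact1} then yields $\|f'-\theta e_{i_0}\|<\varepsilon/3$. Set $\widetilde f=\theta e_{i_0}$ and $\widetilde g=g'$. Since $f'$ and $g'$ are disjoint and $i_0\in\supp f'$, we get $g'(i_0)=0$, so $\widetilde f,\widetilde g$ are disjoint and both lie in $S_{\ell_M}$. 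The triangle inequality gives $\|T(e_1)-\widetilde f\|\leqslant\varepsilon$ and $\|T(e_2)-\widetilde g\|\leqslant\varepsilon$, which is \eqref{2stars}; the symmetric case where the large coordinate belongs to $g'$ is identical.
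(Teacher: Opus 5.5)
Your proposal is correct and follows essentially the paper's route. You use Theorem \ref{thm:MainDisjointnessPreservation} to get disjoint normalized approximants. You then use the gain $\alpha$ in $M(tu)\geqslant\alpha M(t)M(u)$, tested against $\|e_1\pm e_2\|$, to force a coordinate of modulus $>1-h$; the paper uses $r=\|e_1-e_2\|$, which equals your $c$. Fact \ref{fact1} finishes, and packaging the middle step as a standalone gap estimate $\|f+g\|\geqslant c(1+\kappa_0)$ is just a cleaner arrangement of the paper's single contradiction chain. The one detail to make explicit is $\rho=\|f+g\|\geqslant\|f\|=1$ for disjoint $f,g$: this is what allows the weak inequality with $t=1/\rho\leqslant 1$, giving $\rho\geqslant c$ and hence $1/\rho<1-\eta_0$.
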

\begin{proof}
    Denote \[r:=\Vert e_1-e_2\Vert. \]
    So, \[ M(1/r)+M(1/r)=1.\]
    Hence,\begin{equation}\label{r}
        M(1/r)=1/2 \qquad \text{and} \qquad r>1. 
    \end{equation}
    \smallskip 
    
   Let $\varepsilon>0$ and let $h(\varepsilon/2)>0$ be the constant from Fact~\ref{fact1}, chosen so that \[h(\varepsilon/2)\leqslant  1-\tfrac1r\cdot\] 
Since $\alpha(h(\varepsilon/2))>1$ (where $\alpha$ is the function given in the statement of Theorem \ref{thm:MainBasisPreservation}), 
 one may fix $0<\varepsilon'<\varepsilon$ such that 
\[
M\biggl(\frac{1}{1+2\varepsilon'} \biggr) > 
\frac{1}{\alpha(h(\varepsilon/2))}\cdot
\]
By Theorem \ref{thm:MainDisjointnessPreservation}, one can fix $0<\delta <\varepsilon'/6$ such that for every $T\in \operatorname{Emb}_{1+\delta}(\ell_M^2,\ell_M)$, there exist $f, g\in \ell_M$ such that 
\begin{equation*}
\left\{
\begin{aligned}
&\operatorname{supp}( f)\cap \operatorname{supp}( g)=\emptyset,\\
&\|T(e_1)- f\|\leqslant\tfrac{\varepsilon'}{6},\\
&\|T(e_2)- g\|\leqslant\tfrac{\varepsilon'}{6}\cdot  
\end{aligned}
\right.
\end{equation*}   
Let \[f_1:= \frac{f}{\Vert f\Vert }\in S_{\ell_M}\] 
and 
\[g_1:= \frac{g}{\Vert g\Vert }\in S_{\ell_M}. \]
Hence, 
\[\operatorname{supp}( f_1)\cap \operatorname{supp}( g_1)=\emptyset.\]
Moreover, \begin{equation}\label{norm} \begin{aligned}
\Vert T(e_1)-f_1\Vert &\leqslant\Vert T(e_1)-f\Vert +\biggl\Vert f-\frac{f}{\Vert f\Vert}\biggr \Vert \\&\leqslant \varepsilon'/6+ \bigl\vert 1-\Vert f\Vert \bigr\vert. 
\end{aligned} \end{equation}
However, $|1-\|f\|| = |1 - \|T(e_1)\|| + \|T(e_1) - f\| \leqslant \delta + \varepsilon'/6 \leqslant \varepsilon'/3$,
and so \eqref{norm} implies that 
\begin{equation}\label{norm f1}
   \Vert T(e_1)-f_1\Vert\leqslant \varepsilon'/2<\varepsilon/2.\end{equation}
In the same way, we show that 
 \begin{equation}\label{norm g1}
   \Vert T(e_2)-g_1\Vert\leqslant \varepsilon'/2<\varepsilon/2.\end{equation}
Therefore, 
\begin{equation}\label{contradiction} 
\Vert f_1-g_1\Vert \leqslant \Vert f_1-T(e_1)\Vert +\Vert T(e_1)-T(e_2)\Vert+\Vert T(e_2)-g_1\Vert\leqslant \varepsilon'+(1+\delta)r. 
\end{equation}

\smallskip
Assume, for the sake of contradiction, that for any $k\in \N$, \[\vert f_1(k)\vert \leqslant 1-h(\varepsilon/2) \qquad\text{and} \qquad\vert g_1(k)\vert \leqslant 1-h(\varepsilon/2).\]
Hence, 
\[\begin{aligned}
    \sum_{k=1}^\infty M\biggl(\frac{\vert f_1(k)-g_1(k)\vert }{ \varepsilon'+(1+\delta)r}\biggr)&\geqslant \sum_{k=1}^\infty M\biggl(\frac{\vert f_1(k)-g_1(k)\vert }{ (1+\delta+\varepsilon')r}\biggr)\\&>\sum_{k=1}^\infty M\biggl(\frac{\vert f_1(k)-g_1(k)\vert }{ (1+2\varepsilon')r}\biggr)\\&=\sum_{k=1}^\infty M\biggl(\frac{\vert f_1(k)\vert }{ (1+2\varepsilon')r}\biggr)+\sum_{k=1}^\infty M\biggl(\frac{\vert g_1(k)\vert }{ (1+2\varepsilon')r}\biggr)\\&\geqslant M\biggl(\frac{1}{1+2\varepsilon'}\biggl)\biggl[\sum_{k=1}^\infty M\Biggl(\frac{\vert f_1(k)\vert }{ r}\biggr)+\sum_{k=1}^\infty M\biggl(\frac{\vert g_1(k)\vert }{r}\biggr)\biggr] \\&> 
\frac{1}{\alpha(h(\varepsilon/2))}\biggl[\alpha(h(\varepsilon/2))M(1/r)\biggl(\sum_{k=1}^\infty M(\vert f_1(k)\vert)+ \sum_{k=1}^\infty M(\vert g_1(k)\vert)\biggr)\Biggr]\\&=1,
\end{aligned}\]
since $M(1/r)=1/2$. 

Therefore, $\Vert f_1-g_1\Vert > \varepsilon'+(1+\delta)r$, which  contradicts \eqref{contradiction}. 

So, there exists $i_0\in \N$ such that 
\[\vert f_1(i_0)\vert > 1-h(\varepsilon/2) \qquad\text{or} \qquad\vert g_1(i_0)\vert > 1-h(\varepsilon/2).\]

Suppose for example that $\vert f_1(i_0)\vert > 1-h(\varepsilon/2)$. So, by Fact \ref{fact1}, 
\begin{equation}\label{norm f12}\Vert f_1-\operatorname{sgn}(f_1(i_0))e_{i_0}\Vert <\varepsilon/2.\end{equation}
So, by taking $\widetilde f:= \operatorname{sgn}(f_1(i_0))e_{i_0}$ and $\widetilde g:=g_1$, we get: 
\begin{enumerate}
    \item $\operatorname{supp}(\widetilde f)\cap \operatorname{supp}(\widetilde g)=\emptyset$ because $i_0\in \operatorname{supp}(f_1)\minus \operatorname{supp}(g_1)$.
    \item By \eqref{norm f1} and \eqref{norm f12}, we have \[\Vert T(e_1)-\widetilde f \Vert\leqslant \Vert T(e_1)-f_1\Vert +\Vert f_1-\widetilde f\Vert<\varepsilon. \]
    \item By \eqref{norm g1},\[\Vert T(e_2)-\widetilde g\Vert =    \Vert T(e_2)-g_1\Vert\leqslant \varepsilon'/2<\varepsilon/2.\]
\end{enumerate} \end{proof}

\begin{proof}[Proof of Theorem \ref{thm:MainBasisPreservation}]
We prove the theorem in the special case $X = \ell_M^2$. The general case immediately follows.
As in the proof of Lemma \ref{lemma}, we denote \[r:=\Vert e_1-e_2\Vert \]
and so \[M(1/r)=1/2. \]

\smallskip
Let $\varepsilon>0$ and let $h(\varepsilon/2)>0$ be the constant from Fact~\ref{fact1}, chosen so that \[h(\varepsilon/2)\leqslant  1-\tfrac1r\cdot\] 

Since $\alpha(h(\varepsilon/2))>1$, one may fix $0<\varepsilon'<\varepsilon$ such that 
\[M\biggl(\frac{1}{1+2\varepsilon'}\biggr)>\frac{2}{1+\alpha(h(\varepsilon/2))}.\]

Let $\delta=\delta(\varepsilon'/2)>0$ be the constant from Lemma~\ref{lemma}, chosen so that $\delta<\varepsilon'$. 

Let $T\in \operatorname{Emb}_{1+\delta}(\ell_M^2,\ell_M)$. By Lemma \ref{lemma}, we suppose for example that 
\[\Vert T(e_1)-\theta_1 e_{i_1}\Vert \leqslant \varepsilon'/2<\varepsilon/2\] and \begin{equation}\label{dernier_2}\Vert T(e_2)-\widetilde g \Vert \leqslant \varepsilon'/2<\varepsilon/2,\end{equation}
where $\widetilde g \in S_{\ell_M}$, $i_1\notin \operatorname{supp}(\widetilde g)$ and $\theta_1\in \{1,-1\}$. 

Hence, 
\begin{equation}\label{normcontradiction}\Vert \widetilde g -\theta_1 e_{i_1}\Vert \leqslant\Vert \widetilde g -T(e_2)\Vert+\Vert T(e_2)-T(e_1)\Vert +\Vert T(e_1)-\theta_1e_{i_1}\Vert \leqslant \varepsilon'+(1+\delta)r. \end{equation}

Suppose, by contradiction, that for every $k\in \N$, we have \[\vert \widetilde g (k)\vert \leqslant 1-h(\varepsilon/2). \] Hence,
\[\begin{aligned}
    \sum_{k=1}^\infty M\biggl(\frac{\vert \widetilde g(k)-\theta_1 e_{i_1}(k)\vert }{ \varepsilon'+(1+\delta)r}\biggr)&\geqslant \sum_{k=1}^\infty M\biggl(\frac{\vert \widetilde g(k)-\theta_1 e_{i_1}(k)\vert }{ (1+2\varepsilon')r}\biggr)\\&=\sum_{k=1}^\infty M\biggl(\frac{\vert \widetilde g(k)\vert }{ (1+2\varepsilon')r}\biggr)+M\biggl(\frac{1}{(1+2\varepsilon')r}\biggr)\\&\geqslant M\biggl(\frac{1}{(1+2\varepsilon')}\biggr)\Biggr[\sum_{k=1}^\infty M\biggl(\frac{\vert \widetilde g(k)\vert }{ r}\biggr)+M\biggl(\frac{1}{r}\biggr)\Biggr]\\&\geqslant M\biggl(\frac{1}{(1+2\varepsilon')}\biggr)\Biggr[\alpha(h(\varepsilon/2))M(1/r)\sum_{k=1}^\infty M(\vert \widetilde g(k)\vert  )+M\biggl(\frac{1}{r}\biggr)\Biggr]\\&=M\biggl(\frac{1}{(1+2\varepsilon')}\biggr)\Biggr[\frac{1}{2}\alpha(h(\varepsilon/2))+\frac{1}{2}\Biggr]>1, 
\end{aligned}\]
which contradicts \eqref{normcontradiction}
Hence, there exists $i_2\in N$, such that \[\vert \widetilde g (i_2)\vert > 1-h(\varepsilon/2).\]
Note that $i_2\neq i_1$ since $i_1\notin \operatorname{supp}(\widetilde g)$.

By Fact \ref{fact1}, we have \begin{equation}\label{dernier}
    \Vert \widetilde g-\theta_2 e_{i_2}\Vert <\varepsilon/2,\end{equation}
    where $\theta_2=\operatorname{sgn}(\widetilde g (i_2))$. 

Therefore, by \eqref{dernier_2} and \eqref{dernier},  we have 
\[\Vert T(e_2)-\theta_2e_{i_2}\Vert \leqslant  \Vert T(e_2)-\widetilde g\Vert + \Vert \widetilde g-\theta_2 e_{i_2}\Vert<\varepsilon.\qedhere\]
\end{proof}

\bigskip

\section{Cofinally Fra\"iss\'e Orlicz sequence spaces}\label{sec:4}

This section is devoted to the proof of Corollary \ref{cor:OrliczFraisse} and Theorem \ref{thm:Counterexample}. We start with introducing the definition of cofinally Fra\"iss\'e Banach spaces. In what follows, if $E$ and $F$ are two finite-dimensional subspaces of the same Banach space, and $\varepsilon > 0$, we write $E \subseteq_\varepsilon F$ if every $x \in \Sphere_E$ is at distance at most $\varepsilon$ of $\Sphere_F$ (here, $S_E$ denotes the unit sphere of $E$).

\begin{definition}\label{def:CofFraisse}
     Let $X$ be a separable Banach space.
    \begin{itemize}
        \item A class $\mathcal{F}$ of finite-dimensional subspaces of $X$ is said to be \textit{cofinal} in $X$ if for every finite-dimensional subspace $E \subseteq X$ and every $\varepsilon > 0$, there exists $F \in \mathcal{F}$ such that $E \subseteq_\varepsilon F$.
        \item The space $X$ is said to be \textit{cofinally Fra\"iss\'e} if for every $\varepsilon>0$, there is a cofinal class $\mathcal{F}$ of finite-dimensional subspaces of $X$ satisfying the following property: for every $E \in \mathcal{F}$, there exists $\delta > 0$ such that $\Iso(X)$ acts $\varepsilon$-transitively on  $\Emb_{1+\delta}(E,X)$.
    \end{itemize}
\end{definition}

As observed in \cite[Proposition 8.15]{CdRD}, cofinally Fra\"iss\'e Banach spaces are guarded Fra\"iss\'e. It is actually not known whether there exist guarded Fra\"iss\'e Banach spaces that are not cofinally Fra\"iss\'e (although such examples exist in the setting of countable first-order structures, see \cite{KKKP}).

\smallskip

Note that if the Orlicz function $M$ satisfies the assumptions of Theorem \ref{thm:MainBasisPreservation}, then by Corollary \ref{cor:BasisPreservation}, the elements of $\Iso(\ell_M)$ are exactly given by signed permutations of the canonical basis. In particular, there is some $\varepsilon > 0$ for which the action of $\Iso(\ell_M)$ on $S_{\ell_M}$ is not $\varepsilon$-transitive. If follows that $\ell_M$ is not weak Fra\"iss\'e. However, it is cofinally Fra\"iss\'e, as we will now show.

\begin{proof}[Proof of Corollary \ref{cor:OrliczFraisse}]
    For every $n \in \N$, let $E_n \coloneq \Span\{e_k \mid 1\leqslant k \leqslant n\} \subseteq \ell_M$. The class $\mathcal{F} \coloneq \{E_n \mid n \in \N\}$ is clearly cofinal in $\ell_M$. So to conclude, it is enough to show that for every $\varepsilon > 0$ and $n \in \N$, there exists $\delta > 0$ for which $\Iso(\ell_M)$ acts $\varepsilon$-transitively on  $\Emb_{1+\delta}(E_n,\ell_M)$.

    \smallskip

    So fix $n \in \N$ and $\varepsilon > 0$. By Theorem \ref{thm:MainBasisPreservation}, there exists $\delta > 0$ such that for every $T \in \Emb_{1+\delta}(E_n, \ell_M)$ and every basic vector $e \in E_n$, there is a basic vector $f \in \ell_M$ and $\theta \in \{-1, 1\}$ for which $\|T(e) - f\| \leqslant \varepsilon/2n$. Now let $T_1, T_2 \in \Emb_{1+\delta}(E_n, \ell_M)$. For every $1 \leqslant k \leqslant n$ and $i \in \{1, 2\}$, fix $\sigma_i(k) \in \N$ and $\theta_k^i \in \{-1, 1\}$ with $\|T_i(e_k) - \theta_k^ie_{\sigma_i(k)}\| \leqslant \varepsilon/2n$. Since, for $k \neq l$, $\|T_i(e_k) - T_i(e_l)\| \geqslant (1 + \delta)^{-1}$, assuming $\varepsilon$ and $\delta$ were chosen small enough, we have $\sigma_i(k) \neq \sigma_i(l)$. Hence, there is a bijection $\tau \colon \N \to \N$ such that $\tau(\sigma_1(k)) = \sigma_2(k)$ for every $1\leqslant k \leqslant n$. Now let $\alpha_{\sigma_1(k)} \coloneq \theta_k^1\theta_k^2$ for $1 \leqslant k \leqslant n$, and $\alpha_{\sigma_1(k)} = 1$ for $k > n$, thus defining a sequence of signs $(\theta_l)_{l \in \N}$. Define $U \in \Iso(X)$ by $U(e_l) = \alpha_l e_{\tau(l)}$ for every $l \in \N$. We show that $\|UT_1 - T_2\| \leqslant \varepsilon$. First observe that for every $1\leqslant k \leqslant n$, 
    \begin{align*}
    \|UT_1(e_k) - T_2(e_k)\| & \leqslant \|U(T_1(e_k) - \theta_k^1 e_{\sigma_1(k)})\| + \|U(\theta_k^1 e_{\sigma_1(k)}) - T_2(e_k)\|\\
    &\leqslant \|T_1(e_k) - \theta_k^1 e_{\sigma_1(k)}\| + \|\theta_k^2 e_{\sigma_2(k)} - T_2(e_k)\| \leqslant \frac{\varepsilon}{n}.
    \end{align*}
    Knowing that the basis $(e_k)_{k=1}^n$ of $E_n$ is $1$-unconditional, we get, for every $x \in \Sphere_{E_n}$,
    \begin{equation*}
        \|UT_1(x) - T_2(x)\| \leqslant \sum_{k=1}^n |x(k)| \cdot \|UT_1(e_k) - T_2(e_k)\| \leqslant n \cdot \varepsilon/n = \varepsilon,
    \end{equation*}
    concluding the proof.
\end{proof}

We now consider the special case $M(t) = t^pe^{t-1}$, where $p \in (3, \infty)$ is fixed. The space $\ell_M$ is cofinally Fra\"iss\'e by Corollary \ref{cor:OrliczFraisse}. Since $e^{-1}t^p \leqslant M(t) \leqslant t^p$ for $t \in [0, 1]$, by \cite[Proposition 4.a.5]{LT}, the space $\ell_M$ is isomorphic to $\ell_p$. (Note that they are not isometric: indeed, as a consequence of \cite[Theorem 6.2]{CdRD}, $\ell_p$ is cannot be guarded Fra\"iss\'e, since it is bi-finitely representable with $L_p([0, 1])$, which is guarded Fra\"iss\'e.) In particular, $\ell_M$ cannot contain isomorphic copies of $\ell_2$, and hence is not oligomorphic.

\smallskip

To conclude the proof of Theorem \ref{thm:Counterexample}, what remains to be shown is that the age of $\ell_M$ is not closed. To start with, let us first recall the formal definition of the age. For $n \in \N$, denote by $\BM_n$ the set of all Banach spaces of dimension $n$, considered up to isometry. Endow it with the \textit{Banach--Mazur distance}:
$$d_{\BM}(E, F) \coloneq \inf\left\{\log\left(\|T\|\cdot\|T^{-1}\|\right) \,\big|\, T \colon E \to F \text{ is an isomorphism}\right\}.$$ It is a classical fact that this defines a metric on $\BM_n$, that the infimum in its definition is always attained, and that $\BM_n$ is compact for this metric. Let $\BM$ be the disjoint union of all the $\BM_n$'s (endowed with the disjoint union topology). The \textit{age} of a Banach space $X$, denoted by $\Age(X)$, is the set of all $E  \in \BM$ that can be isometrically embedded in $X$. We say that $X$ \textit{has closed age} if its age is a closed subset of $\BM$. Note that a Banach space $X$ is finitely representable in a Banach space $Y$ iff $\Age(X) \subseteq \overline{\Age(Y)}$.

\smallskip

We also recall a classical fact from the theory of Orlicz sequence spaces.

\begin{theorem}[\cite{LT}, Theorem~4.a.9]
 Suppose that $M$ satifies the $\Delta_2$-condition at zero. Then $\ell_p$ is isomorphic to a subspace of $\ell_M$ if and only if $p\in[\alpha_M,\beta_M]$ where 
 \[\alpha_M=\sup\, \biggl\{q;\, \sup_{0<s,t\leqslant 1}\frac{M(st)}{M(s)t^q}<\infty\biggr\},\]
\[\beta_M=\inf\, \biggl\{q;\, \inf_{0<s,t\leqslant 1}\frac{M(st)}{M(s)t^q}>0\biggr\}, \]
are the Boyd indices. 
\end{theorem}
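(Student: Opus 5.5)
This is the classical theorem quoted from \cite[Theorem~4.a.9]{LT}, so we would only follow the Lindenstrauss--Tzafriri route. The plan is to reduce everything to the set
\[
C_{M,1} \coloneq \overline{\mathrm{conv}}\,\Bigl\{ t \mapsto \tfrac{M(\lambda t)}{M(\lambda)} \ \Big|\ 0<\lambda\leqslant 1 \Bigr\} \subseteq \mathcal{C}[0,1/2]
\]
of normalized dilations of $M$, the closure being taken in the sup norm on $[0,1/2]$. The $\Delta_2$-condition at $0$ makes this family equicontinuous, so $C_{M,1}$ is norm compact.

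\textbf{Step 1 (block bases and $C_{M,1}$).} First we show that every normalized block basis of $(e_n)$ has a subsequence equivalent to the unit vector basis of some $\ell_N$ with $N \in C_{M,1}$. We write the $M$-modular of $\sum a_j u_j$, with $u_j = \sum_{k \in I_j} c_k e_k$ normalized, as $\sum_j \sum_{k\in I_j} M(|a_j c_k|)$. The inner sum is a convex combination of dilations $M(|c_k|\,\cdot)/M(|c_k|)$ with weights $M(|c_k|)$, which sum to $1$ by normalization. By compactness we pass to a subsequence along which these functions converge to some $N$; $\Delta_2$ then controls the constants. Conversely, every $N \in C_{M,1}$ is realised this way: approximate $N$ by convex combinations of dilations and build blocks with many repeated small coordinates. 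Since every infinite-dimensional subspace of $\ell_M$ almost contains a block basis (gliding hump plus small perturbation), $\ell_p$ embeds into $\ell_M$ iff $\ell_p \simeq \ell_N$ for some $N \in C_{M,1}$.

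\textbf{Step 2 (reduction to a condition on $N$).} By \cite[Proposition 4.a.5]{LT}, this happens iff $t^p$ is equivalent near $0$ to some $N \in C_{M,1}$. In fact we expect to be able to take $N(t) = t^p$ exactly, after renormalizing.

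\textbf{Step 3, necessity.} For $q<\alpha_M$ we have $M(st) \leqslant C\, M(s)\, t^q$ for all $0<s,t\leqslant 1$. This inequality survives convex combinations and uniform limits, so $N(t) \leqslant C t^q$ for every $N \in C_{M,1}$. If $N \sim t^p$, this forces $p \geqslant q$, and letting $q \uparrow \alpha_M$ gives $p \geqslant \alpha_M$. The symmetric argument with $\beta_M$ gives $p \leqslant \beta_M$.

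\textbf{Step 4, sufficiency (the main obstacle).} We need $t^p \in C_{M,1}$ for every $p \in [\alpha_M,\beta_M]$. For the endpoints, we would first use the definition of $\alpha_M$ to find pairs $(s_n,t_n)$ along which $M(s_n t_n)/M(s_n)$ behaves like $t_n^{\alpha_M}$ on longer and longer scales. We then average the dilations $M(\lambda\,\cdot)/M(\lambda)$ over $\lambda \in [s_n t_n, s_n]$ against a measure of the form $d\lambda/\lambda$. In the limit this should produce a function $N$ satisfying $N(st) = N(s)N(t)$, hence a power function; here we use the multiplicative-averaging trick from \cite{LT}.

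For interior $p$, we would argue that the set of exponents $p$ with $t^p \in C_{M,1}$ is closed. It is also connected: it can be obtained from the set of such multiplicative limits, which varies continuously along a path of scales.

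This step is the one we expect to be hardest. It requires a careful choice of averaging, together with a compactness argument showing that the only submultiplicative-and-supermultiplicative limits are powers $t^p$.
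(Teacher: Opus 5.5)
The paper gives no proof of this theorem; it only quotes \cite{LT}. Your Steps 1--3 are the standard Lindenstrauss--Tzafriri reduction and are fine up to routine details: compactness of $C_{M,1}$, the correspondence between normalized block bases and spaces $\ell_N$ with $N\in C_{M,1}$, and necessity, obtained by passing $M(st)\leqslant CM(s)t^q$ (resp.\ $\geqslant$) to convex combinations and uniform limits. The genuine gap is Step~4, which is the entire sufficiency direction, and neither mechanism you propose works.

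Uniform averaging of $M(\lambda\,\cdot)/M(\lambda)$ against $d\lambda/\lambda$ need not give multiplicative limits, however the windows are chosen. Take $M(x)=x^pe^{\kappa\sin(\log x)}$ near $0$, with $p>1$ and $\kappa>0$ small. Then $\alpha_M=\beta_M=p$ and $M(\lambda t)/M(\lambda)=t^p\exp\bigl(\kappa[\sin(\log\lambda+\log t)-\sin(\log\lambda)]\bigr)$. Uniform logarithmic averages over long windows converge to $t^p h(\log t)$, where $h(v)=\frac{1}{2\pi}\int_0^{2\pi}e^{\kappa(\sin(w+v)-\sin w)}\,dw$. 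This $h$ is $2\pi$-periodic, equals $1$ on $2\pi\mathbb{Z}$, and is $>1$ elsewhere by Jensen, so the limit is not a power.

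Your connectedness claim for interior $p$ fails for the same reason. Suppose $\log M(x)$, as a function of $\log(1/x)$, has slope alternating between $p_1<p_2$ on stretches of length tending to infinity, with short convex transitions. Then $\alpha_M=p_1$ and $\beta_M=p_2$. Uniform averages over sliding windows converge to the continuous family $\theta t^{p_1}+(1-\theta)t^{p_2}$, $0\leqslant\theta\leqslant1$. This family contains no $t^p$ with $p_1<p<p_2$, and is equivalent to $t^{p_1}$ for $\theta>0$. Moving continuously along scales produces mixtures of exponents, not intermediate exponents.

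What is missing is a weighting in which the target exponent is built in. For a window $A\subseteq(0,1]$, put $g(\lambda)=M(\lambda)\lambda^{-p}$ and
\[
N_A(t)=\frac{\int_A M(\lambda t)\,\lambda^{-p-1}\,d\lambda}{\int_A M(\lambda)\,\lambda^{-p-1}\,d\lambda}.
\]
This is an average of the dilations $M(\lambda\,\cdot)/M(\lambda)$ with weights proportional to $g(\lambda)\,d\lambda/\lambda$, so $N_A\in C_{M,1}$. The substitution $\mu=\lambda t$ gives
\[
N_A(t)=t^p\,\frac{\int_{tA} g(\mu)\,d\mu/\mu}{\int_A g(\mu)\,d\mu/\mu},
\]
so $N_A$ equals $t^p$ up to boundary terms. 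You therefore need windows at whose ends $g$ is negligible compared with its mass on $A$; on bounded logarithmic neighbourhoods, convexity and $\Delta_2$ control $g$.

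For $\alpha_M<p<\beta_M$, the definitions of the indices say exactly that $g$ has arbitrarily large rises and arbitrarily large falls as $\lambda\downarrow0$. This forces pronounced local maxima of $g$, and windows around them give $t^p\in C_{M,1}$. In the first example, a full period in $\log\lambda$ gives $N_A=t^p$ exactly. In the second, windows around the switches from slope $p_1$ to slope $p_2$ give every intermediate $t^p$.

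The endpoints then follow from closedness of $C_{M,1}$ when $\alpha_M<\beta_M$. The case $\alpha_M=\beta_M$ needs its own choice of windows, exploiting that $\log g$ is then sublinear in $\log(1/\lambda)$.
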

\noindent Note that since $M$ satifies the $\Delta_2$-condition at zero, it follows that $\beta_M<\infty$. Moreover, it can be easily proved that $1\leqslant\alpha_M\leqslant \beta_M$.

 \begin{Remark}\label{fin.rep}
After the proof of \cite[Theorem~4.a.9]{LT},  the authors observe that if $p\in[\alpha_M,\beta_M]$ then $\ell_M$ contains almost-isometric copies of $\ell_p$, and hence $\ell_p$ is finitely representable in $\ell_M$. Alternatively, this finite representability follows from Krivine's theorem \cite{K}, which implies that $\ell_p$ is finitely representable in any isomorphic copy of itself.
\end{Remark}

The fact that $\Age(\ell_M)$ is not closed when $M(t) = t^pe^{t-1}$, $3 < p < \infty$, is an immediate consequence of the following proposition.

\begin{proposition}\label{age}
    Let $M$ be an Orlicz function that satifying the assumptions of Theorem \ref{thm:MainDisjointnessPreservation}. Suppose that there exists $3 < p < \infty$ and $C>0$ such that 
    \[C\leqslant \frac{M(st)}{M(s)t^p}<1, \;\; \text{for every}\;\; s\in (0,1]\, \text{and}\; t\in (0,1). \]
    Then, $\operatorname{Age}(\ell_M)$ is not closed.
\end{proposition}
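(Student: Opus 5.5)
Take $E=\ell_p^2$. We show that $E\in\overline{\Age(\ell_M)}$ but $E\notin\Age(\ell_M)$.

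\textbf{Membership in the closure.} The hypothesis $C\leqslant M(st)/(M(s)t^p)$ for $s\in(0,1]$, $t\in(0,1)$ gives a lower bound on $\inf_{s,t}M(st)/(M(s)t^p)$. The upper bound $<1$ gives an upper bound on $\sup_{s,t}M(st)/(M(s)t^p)$. Both bounds are taken with exponent $p$, so the Boyd indices satisfy $\alpha_M\geqslant p$ and $\beta_M\leqslant p$. Since $1\leqslant\alpha_M\leqslant\beta_M$, we get $\alpha_M=\beta_M=p$. By \cite[Theorem~4.a.9]{LT} and Remark~\ref{fin.rep}, $\ell_p$ is finitely representable in $\ell_M$. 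In particular $\ell_p^2\in\overline{\Age(\ell_M)}$.

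\textbf{Non-membership.} Suppose $T\colon\ell_p^2\to\ell_M$ is an isometric embedding. The space $\ell_p^2=\ell_{t^p}^2$ is $\ell_{M_1}^2$ with $M_1(t)=t^p$. Since $p>3$, $M_1$ satisfies the hypotheses of Theorem~\ref{thm:MainDisjointnessPreservation}: it is $\mathcal C^3$, $M_1^{(3)}$ is increasing, $M_1(t)=o(t^3)$, and the ratios $tM_1^{(i)}/M_1^{(i-1)}$ are constant. So Corollary~\ref{cor:DisjointnessPreservation} applies with $M_2=M$. Hence $f=T(e_1)$ and $g=T(e_2)$ are disjoint unit vectors of $\ell_M$ with $\|af+bg\|_M=(|a|^p+|b|^p)^{1/p}$ for all scalars $a,b$.

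Take $a=b=2^{-1/p}$, so $\|af+bg\|_M=1$. Disjointness and $\|f\|_M=\|g\|_M=1$ give $\sum_k M(|f(k)|)=\sum_k M(|g(k)|)=1$ (the $\Delta_2$-condition ensures the modular equals $1$ on the unit sphere). Then
\[
\sum_k M\bigl(|af(k)+bg(k)|\bigr)=\sum_k M\bigl(2^{-1/p}|f(k)|\bigr)+\sum_k M\bigl(2^{-1/p}|g(k)|\bigr).
\]
Because $|f(k)|\leqslant1$ and the hypothesis gives $M(st)<t^pM(s)$ for $t=2^{-1/p}<1$, each nonzero term satisfies $M(2^{-1/p}|f(k)|)<\tfrac12M(|f(k)|)$, and likewise for $g$. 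Hence the modular is strictly less than $\tfrac12+\tfrac12=1$.

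A modular strictly below $1$, combined with the $\Delta_2$-condition and continuity of $\rho\mapsto\sum_k M(|x(k)|/\rho)$, forces $\|af+bg\|_M<1$. This contradicts $\|af+bg\|_M=1$, so $\ell_p^2\notin\Age(\ell_M)$ and $\Age(\ell_M)$ is not closed.

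\textbf{Expected obstacles.} The two delicate points are both routine. The first is checking that the hypothesis on the ratio $M(st)/(M(s)t^p)$ gives exactly the Boyd index bounds in the form \cite{LT} states them. The second is justifying the strict norm inequality from the strict modular inequality, which uses continuity of the modular in $\rho$ and the uniform bound from $\Delta_2$. The invocation of Corollary~\ref{cor:DisjointnessPreservation} with $M_1=t^p$ is the key step that makes the argument work.
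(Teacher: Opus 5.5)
Your proof follows the paper's proof step for step. The ratio hypothesis yields $\alpha_M=\beta_M=p$, so $\ell_p^2\in\overline{\Age(\ell_M)}$ by Remark~\ref{fin.rep}. Corollary~\ref{cor:DisjointnessPreservation}, applied with $M_1(t)=t^p$, makes $f=T(e_1)$ and $g=T(e_2)$ disjoint; your check that $t^p$ is admissible for $p>3$ is correct and is only implicit in the paper. The modular of $2^{-1/p}(f+g)$ is then shown to be $<1$.

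\textbf{The gap.} The one step that does not follow from the stated hypotheses is ``because $|f(k)|\leqslant 1$''. The ratio inequality is only assumed for $s\in(0,1]$. A unit vector of $\ell_M$ only satisfies $|f(k)|\leqslant s_0$, where $M(s_0)=1$, and $s_0>1$ as soon as $M(1)<1$. When $M(1)\geqslant 1$ (e.g.\ $M(t)=t^pe^{t-1}$, the case used in Theorem~\ref{thm:Counterexample}) the bound is immediate, since $M(|f(k)|)\leqslant\sum_j M(|f(j)|)=1\leqslant M(1)$ and $M$ is strictly increasing. The paper's proof relies on the same bound tacitly.

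\textbf{Why it matters.} Without such a normalization, $\ell_p^2$ can actually lie in $\Age(\ell_M)$. Put $c=2^{-p}$, $h(w)=w-\epsilon\sin(2\pi w)$, and $M(t)=h(ct^p)$ on $[0,2]$, continued on $[2,\infty)$ with increasing third derivative. For small $\epsilon>0$ one has $t^jM^{(j)}(t)=ct^p\bigl(p(p-1)\cdots(p-j+1)+O(\epsilon)\bigr)$ on $(0,2]$ for $j\leqslant 4$, so the assumptions of Theorem~\ref{thm:MainDisjointnessPreservation} hold. Since $h(w)/w$ is strictly increasing on $(0,2^{-p}]$, the ratio hypothesis holds with $C=1-2\pi\epsilon$. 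Yet $M(2)=h(1)=1$ and $h(w)+h(1-w)=1$, so $(a,b)\mapsto 2ae_1+2be_2$ is an isometric embedding of $\ell_p^2$ into $\ell_M$.

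\textbf{Two repairs.} Either add the hypothesis $M(1)\geqslant1$, or use $\ell_p^3$ as the witness. Let $x_1,x_2,x_3$ be the disjoint images of the basis and set $\psi_i(w)=\sum_k M(w^{1/p}|x_i(k)|)$, which are continuous. They satisfy $\psi_1(w_1)+\psi_2(w_2)+\psi_3(w_3)=1$ whenever $w_i\geqslant 0$ and $w_1+w_2+w_3=1$. Cauchy's functional equation then forces $\psi_1(w)=w$ on $[0,1]$. Applying the ratio hypothesis to the coordinates of $\lambda x_1$, with $\lambda\|x_1\|_\infty\leqslant 1$, gives the same contradiction as your computation. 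Since $\ell_p^3$ still lies in the closure of the age by Remark~\ref{fin.rep}, this proves the statement as written.
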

\begin{proof}
 Under these hypotheses we have $\alpha_M=\beta_M=p$. 
Hence, by Remark~\ref{fin.rep}, $\ell_M$ contains almost-isometric copies of $\ell_p^2$.
To prove that $\operatorname{Age}(\ell_M)$ is not closed, it suffices to show that $\ell_p^2$ does not embed isometrically into $\ell_M$.
Suppose, for a contradiction, that there exists a linear isometric embedding $T:\ell_p^2\to\ell_M$, where $\ell_p^2=\operatorname{span}\{e_1,e_2\}$. 
Write $x_i:=T(e_i)$, for $i=1,2$. Note that $x_i\in S_{\ell_M}$, and by Corollary \ref{cor:DisjointnessPreservation}, those vectors are disjoint.
Let $a:= 2^{-1/p}$. Since $ae_1+ae_2\in S_{\ell_p}$, it follows that 
  \begin{align}
1
&= \sum_{k=1}^\infty M\bigl(a|x_1(k)+x_2(k)|\bigr) \notag\\
&= \sum_{k=1}^\infty M\bigl(a|x_1(k)|\bigr)
      + \sum_{k=1}^\infty M\bigl(a|x_2(k)|\bigr)\notag\\
&< a^p \sum_{k=1}^\infty M\bigl(|x_1(k)|\bigr)
   + a^p \sum_{k=1}^\infty M\bigl(|x_2(k)|\bigr) \notag\\
&= 2a^p =1,\notag
\end{align}
which is a contradiction.
  \end{proof}

\bibliographystyle{plain}
\bibliography{main}
\end{document}